\tikzset{>=angle 60}
\newcommand{\customlabel}[2]{%
   \protected@write \@auxout {}{\string \newlabel {#1}{{#2}{\thepage}{#2}{#1}{}} }%
   \hypertarget{#1}{#2}
}
\newcommand{\tn}[1]{\textnormal{#1}}
\newcommand{\mc}[1]{\mathcal{#1}}
\newcommand{\R}{\mathbb{R}}
\newcommand{\Z}{\mathbb{Z}}
\newcommand{\T}{\top}
\newcommand{\la}{\left \langle}
\newcommand{\ra}{\right \rangle}
\newcommand{\ol}[1]{\overline{#1}}
\DeclareMathOperator{\id}{id}
\DeclareMathOperator{\spt}{spt}
\newcommand{\eqdef}{\stackrel{\mbox{\upshape\tiny def.}}{=}}
\theoremstyle{plain}
\newtheorem{theorem}{Theorem}[section]
\newtheorem{proposition}[theorem]{Proposition}
\newtheorem{corollary}[theorem]{Corollary}
\newtheorem{algorithm}[theorem]{Algorithm}
\theoremstyle{definition}
\newtheorem{definition}[theorem]{Definition}
\newtheorem{remark}[theorem]{Remark}
\newcommand{\neigh}{\mc{N}}
\newcommand{\shield}{S}
\newcommand{\prob}{\mc{P}}
\newcommand{\Rinf}{{\ol{\R}}}
\newcommand{\hpartX}{\mc{X}}
\newcommand{\hpartY}{\mc{Y}}
\newcommand{\hcellX}{\mathbf{x}}
\newcommand{\hcellY}{\mathbf{y}}
\newcommand{\rep}{\tn{rep}}
\newcommand{\rad}{\tn{rad}}
\newcommand{\sphere}{{\mc{S}_2}}
\newcommand{\kreis}{{\mc{S}_1}}
\newcommand{\cGeo}{{c_{\tn{geo}}}}
\newcommand{\cNoise}{{c_{\tn{n}}}}
\newcommand{\cLip}{{c_{\tn{L}}}}
\newcommand{\vol}{\tn{vol}}
\newcommand{\funcShield}{\tn{\texttt{shield}}}
\newcommand{\funcSolve}{\tn{\texttt{solveLocal}}}
\newcommand{\funcSolveShortCuts}{\tn{\texttt{solveSparse}}}
\newcommand{\funcSolveMultiScale}{\tn{\texttt{solveMultiScale}}}
\newcommand{\funcSearch}{\tn{\texttt{searchTree}}}
\numberwithin{equation}{section}
\title{A Sparse Multi-Scale Algorithm for Dense Optimal Transport}
\author{Bernhard Schmitzer\\[2mm]
	CEREMADE, Universit\'e Paris-Dauphine \\
	\texttt{\small schmitzer@ceremade.dauphine.fr}}
\date{\today}
\begin{document}
\maketitle

\begin{abstract}
	Discrete optimal transport solvers do not scale well on dense large problems since they do not explicitly exploit the geometric structure of the cost function.
	In analogy to continuous optimal transport, we provide a framework to verify global optimality of a discrete transport plan locally.
	This allows the construction of an algorithm to solve large dense problems by considering a sequence of sparse problems instead. The algorithm lends itself to being combined with a hierarchical multi-scale scheme. Any existing discrete solver can be used as internal black-box.
	We explicitly describe how to select the sparse sub-problems for several cost functions, including the noisy squared Euclidean distance.
	A significant reduction of run-time and memory requirements is observed.
\end{abstract}

\tableofcontents


\section{Introduction}

\subsection{Background and Motivation}
\label{sec:IntroBackground}
Optimal transport (OT) is a classical optimization problem dating back to the seminal work of Monge and Kantorovich. Over the past decades it has been studied in great detail (see for example \cite{Villani-OptimalTransport-09} for a comprehensive monograph and some historical context and also \cite{Ambrosio2013,Santambrogio-OTAM} for helpful introductions to the subject).
An important step was the polar factorization theorem \cite{MonotoneRerrangement-91} on $\R^n$ for the cost being the squared Euclidean distance. Since then, this result has been generalized to other convex functions on $\R^n$ \cite{McCannGangboOTGeometry1996}, to the squared geodesic distance on Riemannian manifolds \cite{McCannPolarManifold2001} and more general costs on Riemannian manifolds \cite{OTManifoldLagrangian2007}.

OT is also a successfully and widely applied tool in image processing, computer vision and statistics (e.g.~\cite{RubnerEMD-IJCV2000,GeodesicShapeMassTransport-10,OptimalTransportTangent2012,
	wassersteinRegularization2011,RumpfGeneralizedOT2014,Steidl-OT-RGB-2015}).
However, it is computationally more costly than `simple' similarity measures such as $L^p$-distances or Bregman divergences. Consequently there is a need for efficient solvers.

Broadly speaking there are two classes of solvers:
There are discrete (combinatorial) algorithms based on the finite dimensional linear programming formulation, such as the Hungarian method \cite{KuhnHungarianMethod}, the auction algorithm \cite{Bertsekas-AuctionAlgorithm1979} (a parallelized GPU implementation is described in \cite{GraphMatchingGPU-EMMCVPR-2009}), the network simplex \cite{NetworkFlows1993} and more (e.g.~\cite{GoldbergCostScaling1990}).
They work for (almost) arbitrary cost functions, and are typically numerically robust w.r.t.\ input data regularity.
They do not scale well for large, dense problems however, because the geometric structure of the cost function is not used.
Alternatively, there are continuous solvers, based on the polar factorization theorem and the Monge-Amp\`ere equation (e.g.~\cite{OptimalTransportWarping,BrenierMap-10,ObermanMongeAmpere2014}).
These need not handle the full product space, but work directly with a transport map and thus can solve large problems more efficiently.
But they only apply to a restricted family of cost functions (most prominently the squared Euclidean distance) and they are numerically more subtle (e.g.\ involving the Jacobian of the transport map), thus requiring some data regularity.
The celebrated fluid-dynamics formulation \cite{BenamouBrenier2000} is more flexible but introduces the additional cost of a time-dimension.

For the particular case where the cost is a metric, there are specialized efficient solvers (see for example \cite{TreeEMD2007}).
In addition, a wide range of approximate methods has been applied: wavelets \cite{linearApproxMassTransportCVPR2008}, cost function thresholding \cite{Pele2009}, tangent space approximation \cite{OptimalTransportTangent2012} and entropic smoothing \cite{Cuturi2013,BenamouIterativeBregman2015} among others.

Multi-scale schemes have been proposed to accelerate exact solvers (\cite{MultiscaleTransport2011,SchmitzerSchnoerr-SSVM2013,Schmitzer-SSVM2015,ObermanOptimalTransportationLP2015}).
The original problem is approximated by a sequence of successively coarser problems. Starting from the coarsest resolution, the optimal coupling at a given scale will then provide a good initialization for the subsequent finer level.
However, \cite{MultiscaleTransport2011} is limited to the case of the squared Euclidean distance. The algorithm in \cite{SchmitzerSchnoerr-SSVM2013} only uses the geometric structure of the cost implicitly by keeping the problem sparse via hierarchical consistency checks, requiring low level adaptions of the algorithm.
The scheme presented in \cite{ObermanOptimalTransportationLP2015} is based on similar intuition as this article and works very well in practice but does not provide a rigorous framework for verifying global optimality (see Sect.\ \ref{sec:Conclusion} for a discussion).

So there is still a need for efficient discrete exact solvers that are more flexible than the continuous solvers (both in terms of cost functions and measure regularity), but which are still able to exploit the geometric structure of the cost function. Such an algorithm has been presented in \cite{Schmitzer-SSVM2015}, of which the present article is an extension.

\subsection{Outline and Contribution}
An important feature of continuous solvers is that under suitable conditions optimality of the transport plan can be verified by a local criterion: the transport map is the gradient of a convex function.
However, discrete solvers must check optimality globally (e.g.~all dual constraints must be verified).
In this paper we develop a framework for the discrete setting to mimic the locality property of the continuum. Local then means that we only need to look at a sparse sub-problem, determined by the transport plan and the cost.

In Section \ref{sec:OTBackground} we establish notation and briefly recall some basic properties of discrete optimal transport.
In Section \ref{sec:ShortCuts}, after gathering some intuition from the continuous setting, we develop a \emph{rigorous discrete framework} for inferring global optimality of a coupling from local optimality on a suitable sparse sub-problem. For this we introduce the notion of \emph{`shielding neighbourhoods'}.
Based on these results, in Section \ref{sec:Algorithm}, we design an algorithm that solves a dense problem via a sequence of sparse sub-problems. \emph{Convergence} of the algorithm and \emph{global optimality} of the resulting transport plan are proved. \emph{Any discrete OT solver can be used as internal sub-routine.} We propose to combine this algorithm with a hierarchical multi-scale scheme to obtain good initializations and consequently low running-times.
Section \ref{sec:Shielding} is devoted to an important component of the algorithm: the efficient construction of sparse shielding neighbourhoods, exploiting the geometry of the underlying cost-function. We discuss several types of costs on $\R^n$ and also provide an example for the sphere to underline the generality of the concept. A particularly relevant case is the squared Euclidean distance over Cartesian grids which allows the highest acceleration (Remark \ref{rem:ShieldingCartesianGrids}).
It is shown that unlike standard continuous solvers, the discrete method can \emph{tolerate certain types of noise and distortions} of the cost.
In Section \ref{sec:Numerics} a series of numerical experiments is presented to demonstrate the efficiency of the scheme. We observe speed-ups of up to \emph{two orders of magnitude}, depending on the problem class, and reduction in memory requirements by up to three orders with \emph{state-of-the-art solver software} as compared to naively solving the dense problem, thus empirically verifying the efficiency of the multi-scale scheme. The test problems involved both smooth as well as locally concentrated measures and both `clean' and noisy costs, thus indicating a wide range of practical applicability.
A concluding discussion is provided in Section \ref{sec:Conclusion}.

Compared to \cite{Schmitzer-SSVM2015}, in this article we provide more details on the multi-scale scheme (Sect.\ \ref{sec:AlgorithmHierarchy}) and explain how the algorithm can be applied to more general types of problems (Sect.\ \ref{sec:Shielding}): point clouds beyond regular Cartesian grids and cost functions besides the squared Euclidean distance are considered. Moreover, the numerical experiments have been extended (Sect. \ref{sec:Numerics}).


\section{Background on Optimal Transport}
\label{sec:OTBackground}
\paragraph{Notation.}
	For measure spaces $A$ and $B$ denote by $\prob(A)$ the space of probability measures over $A$. For a measurable map $f: A \rightarrow B$ and a measure $\mu \in \prob(A)$ we denote by $f_\sharp \mu \in \prob(B)$ the \emph{push-forward} of $\mu$ given by $f_\sharp \mu(\sigma) = \mu(f^{-1}(\sigma))$ for measurable $\sigma \subset B$.

	For a discrete finite set $A$ we write $|A|$ for its cardinality. For a measure $\mu \in \prob(A)$ its support is defined by $\spt \mu = \{ a \in A \colon \mu(a) > 0 \}$. For singletons we often just write $\mu(a) \eqdef \mu(\{a\})$ for $a \in A$.
	Write $\Rinf = \R \cup \{\infty\}$.
	The space of $\Rinf$-valued functions over $A$ is identified with $\Rinf^{|A|}$ where we index the dimensions by elements of $A$. We write $2^A$ for the power set of $A$.
	
	For a convex function $h : \R^n \rightarrow \Rinf$ we denote by $\partial h(x)$ its sub-differential at $x$.

\paragraph{Discrete Optimal Transport.}
For two discrete finite sets $X$, $Y$ and two probability measures $\mu \in \prob(X)$, $\nu \in \prob(Y)$ the set of couplings is given by
\begin{align}
	\Pi(\mu,\nu) = \left\{ \pi \! \in \prob(X \!\times\! Y) \colon
		\pi(\{x\} \!\times\! Y) = \mu(x),\,
		\pi(X \!\times\! \{y\}) = \nu(y)\,
		\,\forall\, x \in X, y \in Y
		\right\}\,.
\end{align}
For a cost function $c: X \times Y \rightarrow \Rinf$ the optimal transport problem consists of finding the coupling with minimal total transport cost:
\begin{align}
	\label{eq:OT}
	\min_{\pi \in \Pi(\mu,\nu)} C(\pi)
	\qquad \tn{with} \qquad
	C(\pi) = \sum_{(x,y) \in X \times Y} c(x,y)\,\pi(x,y)
\end{align}
The problem is called feasible if its optimal value is finite.
We call \eqref{eq:OT} the \emph{dense} or \emph{full problem}. For some $\neigh \subset X \times Y$ we also consider problem \eqref{eq:OT} subject to the additional constraint $\spt \pi \subset \neigh$, which we call the problem \emph{restricted to $\neigh$}. We call $\neigh$ a \emph{neighbourhood} and say $\neigh$ is feasible when the corresponding problem is feasible. The \emph{set of all possible neighbourhoods} is given by $2^{X \times Y}$ (not all of them being feasible).
We will call $\pi$ a local optimizer w.r.t.~$\neigh$ if it solves the corresponding restricted problem.

The dual problem to \eqref{eq:OT} is given by (\cite[Chapter 5]{Villani-OptimalTransport-09}, see also \cite[Chapters 4 and 7.8]{BertsimasLP})
\begin{subequations}
\label{eq:OTDual}
\begin{align}
	\max_{(\alpha,\beta) \in (\R^{|X|},\R^{|Y|})} \sum_{x \in X} \alpha(x)\,\mu(x) + \sum_{y \in Y} \beta(y)\,\nu(y) \\
	\label{eq:OTDualConstraints}
	\text{subject to} \quad \alpha(x) + \beta(y) \leq c(x,y) \quad \tn{for all} \quad (x,y) \in X \times Y\,.
\end{align}
\end{subequations}

		The relation between any primal and dual optimizers $\pi$ and $(\alpha,\beta)$ of the same transport problem is
\begin{equation}
	\label{eq:PDCondition}
	\pi(x,y) > 0 \qquad \Rightarrow \qquad \alpha(x) + \beta(y) = c(x,y)\,.
\end{equation}

		Restricting the primal problem to $\neigh$ corresponds to only enforcing the dual constraints \eqref{eq:OTDualConstraints} on $\neigh$. Analogously we speak of local dual optimizers $(\alpha,\beta)$ w.r.t~$\neigh$. If $(\pi,(\alpha,\beta))$ are local primal and dual optimizers and $(\alpha,\beta)$ satisfy \eqref{eq:OTDualConstraints} on $X \times Y$, then one has found optimizers for the full problem.

One goal of this paper is to find suitable small subsets $\neigh$ such that the local optimizers $(\pi,(\alpha,\beta))$ w.r.t.~$\neigh$ are also optimal for the full problem.


\section{Optimal Transport and Short-Cuts}
\label{sec:ShortCuts}

\subsection{Intuition from the Continuous Case}
\label{sec:ShortCutsIntuition}
The discrete algorithm we present in this article is inspired by continuous optimal transport. Let us therefore recall some well-known results from the continuous setting.

Let $\mu$, $\nu$ be Lebesgue absolutely continuous measures on $\R^n$ with compact, convex support. Consider the continuous optimal transport problem between $\mu$ and $\nu$ w.r.t.~the cost $c(x,y) = |x-y|^2$.
Then by virtue of Brenier's celebrated polar factorization theorem \cite{MonotoneRerrangement-91} we know 
that the optimal coupling is induced by a map which is the gradient of a convex function. Conversely, when a transport map is shown to be the gradient of a convex function then it is optimal.

Let now $T$ be any transport map, $T_{\sharp} \mu = \nu$, with induced coupling $\pi = (\id,T)_{\sharp} \mu$ (see for example \cite[Def.~1.2]{Villani-OptimalTransport-09}). For simplicity let $T$ be a homeomorphism. We want to verify optimality of $T$.
Let $\{U_i\}_i$ be an open covering of $\spt \mu$. Then $\{V_i\}_i$ with $V_i = T(U_i)$ is an open covering of $\spt \nu$. Let $\mu|_{U_i}$ and $\nu|_{V_i}$ be the restrictions of the measure $\mu$ to $U_i$ and $\nu$ to $V_i$. Then $T$ is also a transport map between $\mu|_{U_i}$ and $\nu|_{V_i}$ for all $i$.
If $T$ is optimal for each restricted problem on $U_i \times V_i$ then optimality for the whole problem follows: when we know that $T$ is the gradient of a convex function on each $U_i$, by convexity of $\spt \mu$ it follows that $T$ is the gradient of a convex function on $\spt \mu$ and thus is the optimal transport map.
Since the patches $U_i$ can be made arbitrarily small, optimality of a coupling $\pi$ can be verified on an arbitrary small open environment of $\spt \pi$ on $(\R^n)^2$. This is illustrated in Fig.~\ref{fig:IllustrationContinuum}.

The \emph{Monge property} \cite{ReviewMongeMatrix-96} is a simple discrete analogy in one dimension: one only needs to check whether two neighbours in $X$ can save costs by swapping mass. If there are no such neighbours then the coupling is optimal.
In this paper we strive to find a discrete equivalent for higher-dimensional problems.
We will return to this discussion for a brief comparison in Sect.~\ref{sec:ShieldingSqrEuclidean}, Remark \ref{rem:ShieldingEuclideanContinuous}.

\begin{figure}
	\centering%
	\subfloat[]{%
		\includegraphics{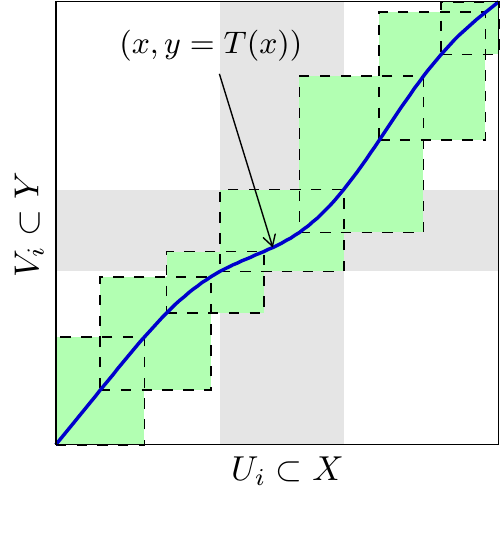}%
		\label{fig:IllustrationContinuum}%
	}%
	\hskip 2cm%
	\subfloat[]{%
		\includegraphics{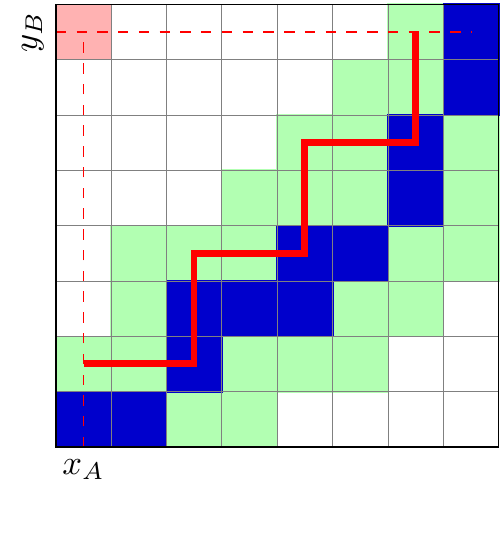}%
		\label{fig:IllustrationShortCut}%
	}%
	\caption{{\protect \subref{fig:IllustrationContinuum}}: For absolutely continuous measures on $X=Y=\R^n$ and the squared Euclidean distance as cost, the optimal transport plan will live on the graph of a map $T : X \rightarrow Y$, which is the gradient of a convex potential $\varphi : X \mapsto \R$. In the continuous case it suffices to check whether $T$ is optimal on each of the patches $U_i \times V_i$. Global optimality then follows. %
	{\protect \subref{fig:IllustrationShortCut}}: As a discrete analogy we introduce the concept of \emph{short-cuts}. The dual constraint at $(x_A,y_B)$ is implied by combining a suitable sequence (solid red line) of local constraints in $\neigh$ (green) and active constraints in $\spt \pi$ (blue).}
\end{figure}

\subsection{Short-Cuts}
Now we introduce the concept of short-cuts, a tool to temporarily remove constraints from the dual problem: dual constraints for which a short-cut exists need no longer be checked (see Fig.~\ref{fig:IllustrationShortCut} for an illustration).

\begin{definition}[Short-Cut]
	For a neighbourhood $\neigh \subset X \times Y$ and a coupling $\pi$ with $\spt \pi \subset \neigh$ let $((x_2,y_2),\ldots,(x_{n-1},y_{n-1}))$ be an ordered tuple of pairs in $\spt \pi$.
	We say $((x_2,y_2),\allowbreak \ldots,\allowbreak(x_{n-1},y_{n-1}))$ is a \emph{short-cut} for $(x_1,y_n) \in X \times Y$ if $(x_{i},y_{i+1}) \in \neigh$ for $i=1,\ldots,n-1$ and
	\begin{equation}
		\label{eq:ShortCutCondition}
		c(x_1,y_n) \geq c(x_1,y_2) + \sum_{i=2}^{n-1}
			\left[c(x_{i},y_{i+1})-c(x_i,y_i) \right]\,.
	\end{equation}
\end{definition}

\begin{proposition}
	\thlabel{thm:ShortCutImpliesConstraint}
	For a set $\neigh \subset X \times Y$ let $(\pi,(\alpha,\beta))$ be a pair of local primal and dual optimizers. Assume for a pair $(x_1,y_n) \notin \neigh$ there exists a short-cut within $\neigh$. Then the dual constraint \eqref{eq:OTDualConstraints} corresponding to $(x_1,y_n)$ is satisfied.
\end{proposition}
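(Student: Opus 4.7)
The plan is to chain together the local dual constraints along the pairs $(x_i, y_{i+1}) \in \neigh$ provided by the short-cut, and cancel the intermediate $\alpha, \beta$ terms using the complementary slackness equalities at the active pairs $(x_i, y_i) \in \spt \pi$. The short-cut inequality \eqref{eq:ShortCutCondition} is precisely what ties the resulting telescoped cost expression back to $c(x_1, y_n)$.

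Concretely, I would first invoke the local primal–dual relation \eqref{eq:PDCondition} on $\neigh$: since $(x_i, y_i) \in \spt \pi$ for $i = 2, \ldots, n-1$, we have $\alpha(x_i) + \beta(y_i) = c(x_i, y_i)$. Next, since each pair $(x_i, y_{i+1})$ with $i = 1, \ldots, n-1$ lies in $\neigh$, the local dual feasibility of $(\alpha,\beta)$ gives
\begin{equation*}
\alpha(x_i) + \beta(y_{i+1}) \leq c(x_i, y_{i+1}), \qquad i = 1, \ldots, n-1.
\end{equation*}
Summing these $n-1$ inequalities and subtracting the $n-2$ equalities above yields, on the left-hand side,
\begin{equation*}
\alpha(x_1) + \beta(y_2) + \sum_{i=2}^{n-1}\bigl[\beta(y_{i+1}) - \beta(y_i)\bigr] = \alpha(x_1) + \beta(y_n),
\end{equation*}
after the telescoping cancellation of the $\alpha(x_i)$ for $i \geq 2$ and the $\beta(y_i)$ for $2 \leq i \leq n-1$. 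The right-hand side becomes exactly the expression appearing in \eqref{eq:ShortCutCondition}, namely $c(x_1, y_2) + \sum_{i=2}^{n-1}\bigl[c(x_i, y_{i+1}) - c(x_i, y_i)\bigr]$, which by the short-cut property is bounded above by $c(x_1, y_n)$.

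Chaining these two bounds gives $\alpha(x_1) + \beta(y_n) \leq c(x_1, y_n)$, which is precisely the desired dual constraint. There is no real obstacle here; the subtlety is purely bookkeeping — making sure the indexing is consistent so that the $\alpha$ and $\beta$ terms telescope cleanly, and that the short-cut condition is stated for exactly the cost difference produced by this alternating sum. Once the algebra is lined up, the statement follows immediately without any additional assumption on $\neigh$, $\pi$, or the cost function.
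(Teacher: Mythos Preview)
Your proposal is correct and follows essentially the same argument as the paper: sum the local dual inequalities $\alpha(x_i)+\beta(y_{i+1})\le c(x_i,y_{i+1})$ over $i=1,\dots,n-1$, subtract the complementary-slackness equalities $\alpha(x_i)+\beta(y_i)=c(x_i,y_i)$ for $i=2,\dots,n-1$, telescope, and apply \eqref{eq:ShortCutCondition}. The only difference is cosmetic---the paper writes the two families of relations as $\beta(y_{i+1})\le c(x_i,y_{i+1})-\alpha(x_i)$ and $-\beta(y_i)=-c(x_i,y_i)+\alpha(x_i)$ before summing---but the content is identical.
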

\begin{proof}
	Let $((x_2,y_2),\ldots,(x_{n-1},y_{n-1}))$ be a short-cut. From \eqref{eq:OTDualConstraints} restricted  to $\neigh$ and \eqref{eq:PDCondition} we find
	\begin{align*}
		\beta(y_{i+1}) & \leq c(x_i,y_{i+1}) - \alpha(x_i) & \text{for } & i=1,\ldots,n-1\,, \\
		- \beta(y_i) & = - c(x_i,y_i) + \alpha(x_i) & \text{for } & i=2,\ldots,n-1
	\end{align*}
	and by summing these up one gets
	\begin{align*}
		\alpha(x_1) + \beta(y_n) & \leq c(x_1,y_2) + \sum_{i=2}^{n-1} \left[ c(x_{i},y_{i+1}) - c(x_i,y_i) \right]\,.
	\end{align*}
	Validity of the dual constraint corresponding to $(x_1,y_n)$ follows from \eqref{eq:ShortCutCondition}. \qedhere
\end{proof}

So is there a clever way to choose a small set $\neigh$ such that there is a short-cut for every $(x,y) \notin \neigh$? But explicitly checking existence of short-cuts for each pair is far too expensive. 
In the next section introduce a simple sufficient condition for the existence of short-cuts.

\subsection{Shielding Condition}
The shielding condition is a local criterion to ensure existence of short-cuts. This eventually allows the verification of global optimality via local optimality. The shielding condition is illustrated in Fig.~\ref{fig:Shielding}.

\begin{figure}
	\centering%
	\includegraphics{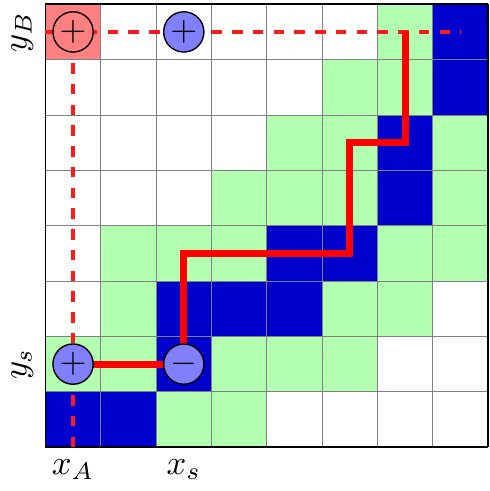}%
	\caption{Discrete transport problem: $\spt \pi$ (dark blue) and local neighbourhood $\neigh$ (green). When $(x_A,y_B)$ is shielded by $(x_s,y_s)$ (indicated by red and blue circles) then the constraint $(x_A,y_B)$ is implied by the constraints $(x_A,y_s)$, $(x_s,y_s)$ and $(x_s,y_B)$. Thus the problem of finding a short-cut for $(x_A,y_B)$ is reduced to the problem of finding a short-cut for $(x_s,y_B)$. If this can be repeated for all $x_s$, eventually the constraint $(x_A,y_B)$ is implied by constraints in $\neigh$ alone.}
	\label{fig:Shielding}
\end{figure}

\begin{definition}[Shielding Condition]
	\label{def:ShieldingCondition}
	Let $x_A \in X$, $(x,y) \in X \times Y$ and $y_B \in Y$. We say $(x_s,y_s)$ shields $x_A$ from $y_B$ when
	\begin{align}
		\label{eq:ShieldingCondition}
		c(x_A,y_B) - c(x_s,y_B) > c(x_A,y_s) - c(x_s,y_s)\,.
	\end{align}
\end{definition}	
The shielding condition states that $\{(x_A,y_s),(x_s,y_B)\}$ is (`strictly') c-cyclically monotone \cite[Chap.~5]{Villani-OptimalTransport-09}.
It implies that suitable $n$-tuples in $\spt \pi$ are in fact short-cuts.

\begin{proposition}
	\thlabel{thm:ShieldingImpliesShortcut}
	For a given coupling $\pi$ let $(x_1,y_n) \in X \times Y$ and $((x_2,y_2),\allowbreak \ldots, \allowbreak (x_{n-1},y_{n-1}))$ be an ordered tuple in $\spt \pi$. If $(x_{i},y_{i+1}) \allowbreak \in \allowbreak \neigh$ for $i=1,\ldots,n-1$ and $(x_{i+1},y_{i+1})$ shields $x_i$ from $y_n$ for $i=1,\ldots,n-2$ then the tuple is a short-cut for $(x_1,y_n)$.
\end{proposition}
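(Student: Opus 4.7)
The plan is to derive the short-cut condition \eqref{eq:ShortCutCondition} by summing the $n-2$ shielding inequalities and observing that the left-hand side telescopes. Since $(x_{i+1},y_{i+1})$ shields $x_i$ from $y_n$, the definition gives, for each $i=1,\ldots,n-2$,
\begin{align*}
	c(x_i,y_n) - c(x_{i+1},y_n) > c(x_i,y_{i+1}) - c(x_{i+1},y_{i+1})\,.
\end{align*}
First I would sum these strict inequalities over $i=1,\ldots,n-2$. The left-hand side is a telescoping sum collapsing to $c(x_1,y_n) - c(x_{n-1},y_n)$, so after moving $c(x_{n-1},y_n)$ to the right one obtains
\begin{align*}
	c(x_1,y_n) > c(x_{n-1},y_n) + \sum_{i=1}^{n-2} \bigl[ c(x_i,y_{i+1}) - c(x_{i+1},y_{i+1}) \bigr]\,.
\end{align*}

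The second step is purely cosmetic: reindex to match the form of \eqref{eq:ShortCutCondition}. Absorbing $c(x_{n-1},y_n)$ into the sum as the $i=n-1$ term extends $\sum_{i=1}^{n-2} c(x_i,y_{i+1})$ to $\sum_{i=1}^{n-1} c(x_i,y_{i+1})$, and shifting the index in the $c(x_{i+1},y_{i+1})$ sum turns it into $\sum_{i=2}^{n-1} c(x_i,y_i)$. Splitting off the $i=1$ term $c(x_1,y_2)$ from the first sum rewrites the bound as
\begin{align*}
	c(x_1,y_n) > c(x_1,y_2) + \sum_{i=2}^{n-1} \bigl[ c(x_i,y_{i+1}) - c(x_i,y_i) \bigr]\,,
\end{align*}
which is \eqref{eq:ShortCutCondition} (with strict inequality, hence certainly with $\geq$). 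The hypothesis $(x_i,y_{i+1}) \in \neigh$ for $i=1,\ldots,n-1$ is assumed directly, so all the requirements in the definition of a short-cut are met.

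There is no substantial obstacle: the argument is pure bookkeeping once the shielding inequalities are aligned so that the $y_n$ terms telescope. The only point requiring care is the index range — shielding is invoked for $i=1,\ldots,n-2$ (involving the pairs $(x_{i+1},y_{i+1})$ with $i+1 \leq n-1$, which are in $\spt \pi$ by assumption), and the membership $(x_i,y_{i+1}) \in \neigh$ is not actually used in this calculation but is part of the short-cut definition that must be verified.
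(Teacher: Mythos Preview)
Your proof is correct and follows essentially the same approach as the paper: write down the $n-2$ shielding inequalities, sum them so that the $y_n$ terms telescope, and rearrange. The paper stops at the telescoped inequality $c(x_1,y_n) > \sum_{i=1}^{n-2}[c(x_i,y_{i+1})-c(x_{i+1},y_{i+1})] + c(x_{n-1},y_n)$, while you additionally spell out the reindexing to match \eqref{eq:ShortCutCondition} exactly and note that the $\neigh$-membership hypothesis is used only to satisfy the definition, not in the inequality itself.
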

\begin{proof}
	We need to show that \eqref{eq:ShortCutCondition} holds. For $i=1,\ldots,n-2$ we have from \eqref{eq:ShieldingCondition}
	\begin{align*}
		c(x_i,y_n) - c(x_{i+1},y_{n}) > c(x_{i},y_{i+1}) - c(x_{i+1},y_{i+1})\,.
	\end{align*}
	Summing up yields
		$c(x_1,y_n) >
			\sum_{i=1}^{n-2} \left[ c(x_{i},y_{i+1}) - c(x_{i+1},y_{i+1}) \right] + c(x_{n-1},y_{n})$. \qedhere
\end{proof}

We now introduce a sufficient condition for a set $\neigh$ such that short-cuts exist for all $(x,y) \notin \neigh$.
\begin{definition}[Shielding Neighbourhood]
	\label{def:ShieldingNeighbourhood}
	For a given coupling $\pi$ we say that a neighbourhood $\neigh \subset X \times Y$, $\neigh \supset \spt \pi$ is \emph{shielding} if for every pair $(x_A,y_B) \in X \times Y$ at least one of the following is true:
	\begin{enumerate}[label=(\roman*),ref=\roman*]
		\item $(x_A,y_B) \in \neigh$.
		\item There exists some $(x_s,y_s) \in \spt \pi$ with $(x_A,y_s) \in \neigh$ such that $(x_s,y_s)$ shields $x_A$ from $y_B$.
	\end{enumerate}
\end{definition}		

\begin{proposition}[Existence of Short-Cuts]
	\thlabel{thm:ShortCutExistence}
	For a given coupling $\pi$ let $\neigh$ be a shielding neighbourhood. Then there exists a short-cut for every $(x_A,y_B) \in (X \times Y) \setminus \neigh$.
\end{proposition}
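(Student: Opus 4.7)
The plan is to build a short-cut for $(x_A, y_B)$ by iteratively applying clause (ii) of Definition~\ref{def:ShieldingNeighbourhood}, and then to invoke \thref{thm:ShieldingImpliesShortcut} on the resulting tuple once the construction terminates.

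I will initialise with $x_1 = x_A$ and regard $y_B$ as the eventual terminal label $y_n$. Since $(x_1, y_B) \notin \neigh$, clause (i) of the shielding definition fails, so clause (ii) must supply a pair $(x_2, y_2) \in \spt \pi$ with $(x_1, y_2) \in \neigh$ and $(x_2, y_2)$ shielding $x_1$ from $y_B$. Next I examine $(x_2, y_B)$: if it already lies in $\neigh$ the construction stops; otherwise I reapply clause (ii) at $(x_2, y_B)$ to extract $(x_3, y_3) \in \spt \pi$ with $(x_2, y_3) \in \neigh$ and $(x_3, y_3)$ shielding $x_2$ from $y_B$, and so on. Suppose the process halts after $k$ steps, meaning $(x_k, y_B) \in \neigh$ while $(x_i, y_B) \notin \neigh$ for $i < k$. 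Setting $n = k+1$ so that $y_n = y_B$, the ordered tuple $((x_2, y_2), \ldots, (x_k, y_k))$ will by construction satisfy $(x_i, y_{i+1}) \in \neigh$ for $i = 1, \ldots, n-1$ and $(x_{i+1}, y_{i+1})$ will shield $x_i$ from $y_n$ for $i = 1, \ldots, n-2$, so \thref{thm:ShieldingImpliesShortcut} will certify the tuple is a short-cut for $(x_A, y_B)$.

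The main obstacle will be termination of this iteration. Each new pair $(x_i, y_i)$ with $i \geq 2$ lives in the finite set $\spt \pi$, so an infinite run would have to revisit some pair, say $(x_j, y_j) = (x_k, y_k)$ with $j < k$. To exclude this I plan to sum the strict shielding inequalities
\[
    c(x_i, y_B) - c(x_{i+1}, y_B) > c(x_i, y_{i+1}) - c(x_{i+1}, y_{i+1})
\]
over $i = j, \ldots, k-1$. The left-hand side telescopes to $c(x_j, y_B) - c(x_k, y_B) = 0$, while the right-hand side, after collecting terms and using $(x_k, y_k) = (x_j, y_j)$, reduces to $\sum_{i=j}^{k-1}[c(x_i, y_{i+1}) - c(x_i, y_i)]$. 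The resulting strict inequality $0 > \sum_{i=j}^{k-1}[c(x_i, y_{i+1}) - c(x_i, y_i)]$ must then be contradicted by $c$-cyclical monotonicity of $\spt \pi$, which is precisely what the primal-dual relation \eqref{eq:PDCondition} furnishes when $\pi$ is (locally) optimal on $\neigh$ via the dual potentials $(\alpha, \beta)$: bounding $c(x_i, y_{i+1}) \geq \alpha(x_i) + \beta(y_{i+1})$ and $c(x_i, y_i) = \alpha(x_i) + \beta(y_i)$ makes the right-hand sum telescope to $\beta(y_k) - \beta(y_j) = 0$. With this contradiction in hand the iteration must stop after finitely many steps, completing the construction.
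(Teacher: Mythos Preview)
Your argument is correct and mirrors the paper's proof essentially step for step: the paper packages the same iterative construction as an auxiliary algorithm (Algorithm~\ref{alg:PathConstruction}), rules out cycles by summing the strict shielding inequalities and contradicting via the local dual pair $(\alpha,\beta)$ (telescoping in $\alpha$ where you telescope in $\beta$, which is equivalent), and then invokes \thref{thm:ShieldingImpliesShortcut}. Note that, like the paper, your no-cycle argument tacitly assumes $\pi$ is locally optimal on $\neigh$ so that \eqref{eq:PDCondition} is available; this is harmless since the only use of the proposition is in \thref{thm:GlobalOptimality}, where that hypothesis is explicit.
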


For the proof we use the following auxiliary algorithm.

\begin{algorithm}
			\label{alg:PathConstruction}
			Input: $(\neigh,\pi)$ as specified in {\protect \thref{thm:ShortCutExistence}}, $(x_A,y_B)\allowbreak \in (X \times Y) \setminus \neigh$.
			Output: short-cut for $(x_A,y_B)$. \newline
			\tn{
			\indent \texttt{$n \leftarrow 1$; $x_1 \leftarrow x_A$} \newline
			\indent \texttt{while $(x_n,y_B) \notin \neigh$:} \newline
			\indent \indent \texttt{find $(x_{n+1},y_{n+1}) \in \spt \pi$ with $(x_{n},y_{n+1}) \in \neigh$ such that $\backslash$} \newline
			\indent \indent \indent \indent \texttt{$(x_{n+1},y_{n+1})$ shields $x_n$ from $y_B$} \newline
			\indent \indent \texttt{$n \leftarrow n+1$} \newline
			\indent \texttt{return $((x_2,y_2),\ldots,(x_{n},y_{n}))$}
			}
\end{algorithm}
\begin{remark}[Pseudo-Code Syntax]
	The pseudo-code syntax used in this article is based on \texttt{Python}, combined with mathematical expressions e.g.\ for set construction. $x \leftarrow y$ denotes value assignment to variables.
\end{remark}

\begin{proof}[Proof of {\protect \thref{thm:ShortCutExistence}}]
	We show that Algorithm \ref{alg:PathConstruction} always terminates and returns a valid short-cut for any pair $(x_A,y_B) \notin \neigh$.
	
	By virtue of Definition \ref{def:ShieldingNeighbourhood} in each iteration either the \texttt{while}-loop terminates or there exists a suitable pair $(x_{n+1},y_{n+1}) \in \spt \pi$ shielding $x_n$ from $y_B$.
	Since the number of elements in $\spt \pi$ is finite, either the loop eventually terminates or a cycle occurs.
	Assume we had found a cycle, i.e.~$(x_i,y_i) = (x_{k},y_{k})$ for some $i > 1$, $k\geq i+1$. By adding up the shielding condition \eqref{eq:ShieldingCondition} around the cycle we find:
	\begin{align*}
		\sum_{j=i}^{k-1} \left[ c(x_{j},y_{j+1}) - c(x_{j+1},y_{j+1}) \right] & <
		\sum_{j=i}^{k-1} \left[ c(x_{j},y_B) - c(x_{j+1},y_B) \right] \\
		& = c(x_i,y_B) - c(x_{k},y_B) = 0
	\end{align*}	
	Let $(\alpha,\beta)$ be any pair of corresponding local dual optimizers.
	The dual constraints \eqref{eq:OTDualConstraints} on $\neigh$ and the primal-dual relation \eqref{eq:PDCondition} imply:
	\begin{align*}
		0 & = \sum_{j=i}^{k-1} \left[ \alpha(x_{j}) - \alpha(x_{j+1}) \right]
			\leq \sum_{j=i}^{k-1} \left[
				\big( c(x_{j},y_{j+1})-\beta(y_{j+1}) \big) -
				\big( c(x_{j+1},y_{j+1})-\beta(y_{j+1}) \big)
				\right] \\
			& = \sum_{j=i}^{k-1} \left[
				c(x_{j},y_{j+1}) - c(x_{j+1},y_{j+1})
				\right]
	\end{align*}
	This contradiction implies that no cycles occur and the algorithm eventually terminates.
	
	Once the iteration is terminated, \thref{thm:ShieldingImpliesShortcut} provides that $((x_2,y_2),\ldots,(x_{n-1},y_{n-1}))$ is a short-cut for $(x_A,y_B)$.
\end{proof}
\begin{remark}
	\label{rem:ShieldingLEQ}
	Note that the strict inequality in \eqref{eq:ShieldingCondition} is merely required to guarantee termination of Algorithm \ref{alg:PathConstruction}. Proposition \ref{thm:ShieldingImpliesShortcut} already follows from $\geq$ in \eqref{eq:ShieldingCondition}.
\end{remark}
We emphasize that Algorithm \ref{alg:PathConstruction} is not intended to be run to find short-cuts. This would be immensely more expensive than directly checking the corresponding dual constraint. It is merely a tool for the proof.
We now summarize:
\begin{corollary}[Global Optimality from Local Optimality]
	\thlabel{thm:GlobalOptimality}
		Let $\pi$ be a local primal optimizer for a given feasible neighbourhood $\neigh$ and let $\neigh$ be shielding for $\pi$. Then $\pi$ is globally optimal.
\end{corollary}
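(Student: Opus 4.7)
The plan is to assemble the results of the section in a short chain: local primal optimality together with LP duality gives us local dual optimizers, the shielding hypothesis plus Propositions \ref{thm:ShortCutExistence} and \ref{thm:ShortCutImpliesConstraint} upgrade them to globally feasible dual variables, and then complementary slackness on $\spt \pi \subset \neigh$ closes the argument.

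Concretely, I would first invoke finite-dimensional LP duality for the problem restricted to $\neigh$. Since $\neigh$ is feasible and $\pi$ is a local primal optimizer, there exist local dual optimizers $(\alpha,\beta) \in \R^{|X|} \times \R^{|Y|}$ satisfying the dual constraints \eqref{eq:OTDualConstraints} on $\neigh$, with no duality gap between $\pi$ and $(\alpha,\beta)$. In particular, the primal-dual relation \eqref{eq:PDCondition} holds for every $(x,y) \in \spt \pi$, which is legitimate because $\spt \pi \subset \neigh$ by hypothesis.

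Next, I would show that $(\alpha,\beta)$ is in fact dual feasible for the \emph{full} problem. Pick any $(x_A,y_B) \in X \times Y$. If $(x_A,y_B) \in \neigh$, the constraint holds by local dual optimality. Otherwise, the shielding hypothesis on $\neigh$ lets me apply \thref{thm:ShortCutExistence} to produce a short-cut for $(x_A,y_B)$ consisting of pairs in $\spt \pi$. Feeding this short-cut into \thref{thm:ShortCutImpliesConstraint} (whose hypotheses are exactly the local primal/dual optimality we just established) yields $\alpha(x_A) + \beta(y_B) \leq c(x_A,y_B)$. Thus $(\alpha,\beta)$ satisfies every dual constraint in $X \times Y$, i.e.\ it is feasible for the full dual \eqref{eq:OTDual}.

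Finally, since $\pi \in \Pi(\mu,\nu)$ is primal feasible for the full problem and $(\alpha,\beta)$ is dual feasible with $C(\pi) = \sum_x \alpha(x)\mu(x) + \sum_y \beta(y)\nu(y)$ (the local no-duality-gap identity, which only involves sums over $\spt \pi \subset \neigh$), weak duality between the full primal and dual forces $\pi$ to be globally optimal and $(\alpha,\beta)$ to be a global dual optimizer. No step here is really the ``hard part'': the conceptual work was done in building up short-cuts and shielding neighbourhoods, and this corollary is essentially the bookkeeping that glues Propositions \ref{thm:ShortCutImpliesConstraint} and \ref{thm:ShortCutExistence} together via LP duality.
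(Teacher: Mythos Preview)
Your proof is correct and follows essentially the same route as the paper: obtain local dual optimizers $(\alpha,\beta)$, use \thref{thm:ShortCutExistence} together with \thref{thm:ShortCutImpliesConstraint} to upgrade them to globally dual feasible, and conclude via the vanishing duality gap. The paper phrases the last step as ``strong duality of linear programs'' rather than your explicit complementary-slackness computation, but the content is the same.
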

\begin{proof}
	Let $(\alpha,\beta)$ be any local dual optimizers w.r.t.~$\neigh$.
	From \thref{thm:ShortCutExistence} follows existence of short-cuts for all dual constraints not covered by $\neigh$ and by \thref{thm:ShortCutImpliesConstraint} follows that $(\alpha,\beta)$ are globally dual feasible. Due to strong duality of linear programs, $\pi$ and $(\alpha,\beta)$ have vanishing duality gap, therefore $(\pi,(\alpha,\beta))$ are global primal and dual optimizers.
\end{proof}


\section{A Sparse Multi-Scale Algorithm}
\label{sec:Algorithm}
\subsection{Basic Algorithm}
	\label{sec:AlgorithmBase}
	\thref{thm:GlobalOptimality} can be used to construct an efficient sparse algorithm for large OT problems. The main ingredients of the algorithm are two maps:
	\begin{enumerate}[label=(\roman*),ref=\roman*]
		\item $\funcSolve: 2^{X \times Y} \rightarrow \Pi(\mu,\nu)$ such that for $\neigh \in 2^{X \times Y}$ the coupling $\funcSolve(\neigh)$ is locally primal optimal w.r.t.~$\neigh$.
			When $\neigh$ is sparse, any discrete OT solver can quickly provide an answer.
		\item $\funcShield: \Pi(\mu,\nu) \rightarrow 2^{X \times Y}$ such that for $\pi \in \Pi(\mu,\nu)$ the neighbourhood $\funcShield(\pi)$ is shielding for $\pi$.
			It is important for efficiency that $\funcShield(\pi)$ is sparse. To design such a map one must use the geometric structure of the cost function. In Sect.~\ref{sec:Shielding} we discuss how to implement $\funcShield$ for several costs.
	\end{enumerate}
	\thref{thm:GlobalOptimality} entails a `chicken and egg'-problem: For a given $\neigh_1$ let $\pi_2\allowbreak =\allowbreak \funcSolve(\neigh_1)$. But if $\pi_2$ is not globally optimal, then $\neigh_1$ cannot be shielding w.r.t.~$\pi_2$.
	Conversely, for some $\pi_2$ let $\neigh_2 = \funcShield(\pi_2)$, but $\pi_2$ will only be locally optimal w.r.t.~$\neigh_2$ if it is globally optimal.
	To find a configuration $(\neigh,\pi)$ such that both criteria are satisfied simultaneously, one can iterate both maps.

	\begin{algorithm}[\funcSolveShortCuts]
		\label{alg:ShortCutSolver}
		Input: initial feasible neighbourhood $\neigh_1$.
		Output: global optimizer $\pi$, neighbourhood $\neigh$ which is shielding for $\pi$.\newline
		\tn{
			\indent \texttt{$k \leftarrow 1$} \newline
			\indent \texttt{do:}\newline
			\indent \indent \texttt{
				$\pi_{k+1} \leftarrow \funcSolve(\neigh_k)$;
				$\neigh_{k+1} \leftarrow \funcShield(\pi_{k+1})$;
				$k \leftarrow k+1$} \newline
			\indent \texttt{until ($k>2$) and ($C(\pi_k) = C(\pi_{k-1})$)} \newline
			\indent \texttt{return $(\pi_k,\neigh_k)$}
		}
	\end{algorithm}
	\begin{proposition}
		For a feasible initial neighbourhood $\neigh_1$ Algorithm \ref{alg:ShortCutSolver} terminates after a finite number of iterations and returns a global primal optimizer.
	\end{proposition}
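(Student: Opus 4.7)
The plan is to (i) show that the sequence of costs $(C(\pi_k))_{k \geq 2}$ is non-increasing, (ii) argue that only finitely many values can occur so the stopping criterion eventually triggers, and (iii) show that at termination \thref{thm:GlobalOptimality} applies.

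First I would establish, by induction on $k$, that every $\neigh_k$ is feasible and that $C(\pi_{k+1}) \leq C(\pi_k)$ for $k \geq 2$. The base case is $\neigh_1$ feasible by hypothesis, so $\pi_2 = \funcSolve(\neigh_1)$ is well-defined with $\spt \pi_2 \subset \neigh_1$. Given feasible $\neigh_k$, the coupling $\pi_{k+1} = \funcSolve(\neigh_k)$ exists and satisfies $\spt \pi_{k+1} \subset \neigh_k$. Since $\neigh_{k+1} = \funcShield(\pi_{k+1})$ is shielding for $\pi_{k+1}$, Definition \ref{def:ShieldingNeighbourhood} forces $\spt \pi_{k+1} \subset \neigh_{k+1}$, so $\pi_{k+1}$ is itself feasible for the problem restricted to $\neigh_{k+1}$; this makes $\neigh_{k+1}$ feasible and gives $C(\pi_{k+2}) \leq C(\pi_{k+1})$ by local optimality of $\pi_{k+2}$.

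For termination, I would define $v(\neigh) = \min\{C(\pi) : \pi \in \Pi(\mu,\nu),\, \spt \pi \subset \neigh\}$ for feasible $\neigh$. Since $X \times Y$ is finite, $2^{X \times Y}$ is finite, so the set of values $\{v(\neigh) : \neigh \in 2^{X \times Y}, \neigh \text{ feasible}\}$ is a finite subset of $\R$. Each $C(\pi_{k+1})$ equals $v(\neigh_k)$ and thus lies in this set, so the non-increasing sequence $(C(\pi_k))_{k \geq 2}$ can only take finitely many distinct values and must become constant after finitely many steps; in particular the condition $C(\pi_k) = C(\pi_{k-1})$ holds for some finite $k \geq 3$.

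Finally, global optimality at termination: since $\neigh_{k-1} = \funcShield(\pi_{k-1})$ is shielding for $\pi_{k-1}$, we have $\spt \pi_{k-1} \subset \neigh_{k-1}$, so $\pi_{k-1}$ is feasible for the problem restricted to $\neigh_{k-1}$. Its cost equals $C(\pi_k)$, the optimal value of this restricted problem, so $\pi_{k-1}$ is itself a local optimizer w.r.t.~$\neigh_{k-1}$. By \thref{thm:GlobalOptimality}, $\pi_{k-1}$ is globally optimal, hence so is the returned $\pi_k$ since $C(\pi_k) = C(\pi_{k-1})$.

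The main conceptual obstacle is recognizing that the equality test in the stopping criterion is leveraged at the previous iterate $\pi_{k-1}$ (where both a shielding neighbourhood and now, thanks to the equality, local optimality are available), rather than at $\pi_k$. A secondary subtlety is that $\funcSolve$ need not be deterministic; this is harmless because the argument only uses the well-defined optimal value $v(\neigh)$ and never relies on $\funcSolve$ returning the same coupling twice.
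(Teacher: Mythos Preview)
Your proof is correct and follows essentially the same route as the paper: monotonicity of $C(\pi_k)$ via $\spt \pi_k \subset \neigh_k$, termination by finiteness, and then applying \thref{thm:GlobalOptimality} to the earlier iterate $\pi_{k-1}$ (the paper writes this as $\pi_k$ with the index shifted by one relative to the algorithm). Your termination argument via finiteness of $\{v(\neigh):\neigh\in 2^{X\times Y}\}$ is slightly more explicit than the paper's one-line appeal to finite-dimensionality, and your remark on non-determinism of $\funcSolve$ is a nice clarification the paper omits.
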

	\begin{proof}
		For $k>1$ have $\neigh_k = \funcShield(\pi_k)$ and $\pi_{k+1} = \funcSolve(\neigh_k)$. So $\spt \pi_k \subset \neigh_k$ and therefore $\pi_k$ is a feasible coupling when computing the optimal $\pi_{k+1}$, with support restricted to $\neigh_k$.
		It follows $C(\pi_{k+1}) \leq C(\pi_k)$. Since the problem is finite-dimensional one must eventually find $C(\pi_{k+1}) = C(\pi_k)$ and the algorithm terminates.
		Then $\pi_k$ is locally optimal w.r.t.~$\neigh_k$ and by construction $\neigh_k$ is shielding for $\pi_k$. Consequently $\pi_k$ and $\pi_{k+1}$ are globally optimal.
	\end{proof}
		
	We have rigorously established that Algorithm \ref{alg:ShortCutSolver} does terminate and return a global optimizer.
	In Sect.~\ref{sec:Numerics} we will demonstrate numerically that under two conditions it is in fact very efficient:
	\begin{enumerate}[label=(\roman*),ref=\roman*]
		\item As mentioned earlier, when $\neigh_k$ is sparse, calling the solver to compute the next coupling $\pi_{k+1} \leftarrow \funcSolve(\neigh_k)$ will be fast. In Sect.~\ref{sec:Shielding} we will show how to implement $\funcShield$ for several costs.
		\item When the initial guess $\neigh_1$ is good, only few iterations will be required. In Sect.~\ref{sec:AlgorithmHierarchy} we present a heuristic multi-scale scheme that works well in practice.
	\end{enumerate}
	
\subsection{Multi-Scale Scheme}
	\label{sec:AlgorithmHierarchy}
	
	The purpose of Algorithm \ref{alg:ShortCutSolver} is to accelerate the solving of large problems by starting from a smart initial guess for the sparse neighbourhood and then quickly solving a sequence of sparse problems until convergence instead of trying to solve the dense problem directly.
	As in \cite{MultiscaleTransport2011,SchmitzerSchnoerr-SSVM2013,ObermanOptimalTransportationLP2015} we approximate the original problem by multiple levels of successively coarser problems and then solve the original problem from coarse to fine. At each resolution we use the support of the optimal coupling as initialization $\neigh_1$ at the subsequent finer scale.
	
	It is verified empirically that this heuristic scheme works well in practice (see \cite{MultiscaleTransport2011,SchmitzerSchnoerr-SSVM2013,ObermanOptimalTransportationLP2015} and Section \ref{sec:Numerics}). Note however that \emph{we do not make any rigorous claims} on the computational efficiency of it. See Sect.~\ref{sec:Conclusion} for a discussion of the computational complexity.
	
	We now describe the multi-scale scheme in some more detail.
	\begin{definition}[Hierarchical Partition and Multi-Scale Measure Approximation \cite{SchmitzerSchnoerr-SSVM2013}]
		\label{def:HierarchicalPartition}
		For a discrete set $X$ a \emph{hierarchical partition} is an ordered tuple $(\hpartX_0,\ldots,\hpartX_K)$ of partitions of $X$ where $\hpartX_0 = \{ \{x\} \colon x \in X\}$ is the trivial partition of $X$ into singletons and each subsequent level is generated by merging cells from the previous level, i.e.~for $k \in \{1,\ldots,K\}$ and any $\hcellX \in \hpartX_k$ there exists some $\hat{\hpartX} \subset \hpartX_{k-1}$ such that $\hcellX = \bigcup_{\hat{\hcellX} \in \hat{\hpartX}} \hat{\hcellX}$.
		For simplicity we assume that the coarsest level is the trivial partition into one set: $\hpartX_K = \{X\}$.
		We call $K>0$ the \emph{depth} of $\hpartX$.
		
		This implies a directed tree graph with vertex set $\bigcup_{k'=0}^K \hpartX_{k'}$ and for $k \in \{1,\ldots,K\}$ we say $\hcellX' \in \hpartX_{j}$, $j<k$, is a \emph{descendant} of $\hcellX \in \hpartX_k$ when $\hcellX' \subset \hcellX$. We call $\hcellX'$ a \emph{child} of $\hcellX$ for $j=k-1$, and a \emph{leaf} for $j=0$.
		
		For some $\mu \in \prob(X)$ its \emph{multi-scale measure approximation} is the tuple $(\mu_0,\ldots,\mu_K)$ of probability measures $\mu_k \in \prob(\hpartX_k)$ defined by $\mu_k(\hat{\hpartX}) = \mu(\bigcup_{\hcellX \in \hat{\hpartX}} \hcellX)$ for all subsets $\hat{\hpartX} \subset \hpartX_k$ and $k=0,\ldots K$.
		
		For convenience we often identify $X$ with the finest partition level $\hpartX_0$, the set of singletons, and $\mu$ with $\mu_0$.
	\end{definition}
	In the examples discussed in this article $X$ and $Y$ are point clouds in $\R^n$ and the cost $c$ is originally defined on the full continuous space $\R^n \times \R^n$. We use hierarchical $2^n$-trees as partitions (some minor adaptions are necessary for the sphere discussed in Sect.~\ref{sec:ShieldingSphere}).
	\begin{definition}[Hierarchical $2^n$-trees and Hierarchical Costs]
	\label{def:2NTrees}
	For a given finite point cloud $X \subset \R^n$ and a desired depth $K>0$ we first choose an axis-aligned hypercube $\mc{Q}_K \subset \R^n$ that contains $X$. Correspondingly we set the coarsest partition layer to $\hpartX_K = \{X\}$.
	Then we divide $\mc{Q}_K$ into $2^n$ equal-sized smaller cubes $\{\mc{Q}_{K-1,i}\}_i$ parallel to the axes and set the corresponding partition layer to $\hpartX_{K-1}=\{X \cap \mc{Q}_{K-1,i}\}_i$. Empty cubes may be ignored. We repeat this recursively until level $1$ is reached. Then we add the layer of singletons $\hpartX_0 = \{ \{x\} \colon x \in X\}$.
	Clearly this produces a valid hierarchical partition $(\hpartX_0,\ldots,\hpartX_K)$ of $X$.
	
	Moreover, for each cube $\mc{Q}_{k,i}$ and the corresponding cell $\hcellX_i \in \hpartX_k$ at some partition level $k>0$ we define the \emph{representative} $\rep(\hcellX_i)$ as the center of $\mc{Q}_{k,i}$ and the \emph{radius} $\rad(\hcellX_i)$ as half of the diameter of $\mc{Q}_{k,i}$ such that
	\begin{align}
		\tn{for all} \quad x \in \mc{Q}_{k,i} \quad \tn{have} \quad |\rep(\hcellX_i)-x| \leq \rad(\hcellX_i)\,.
	\end{align}
	For $k=0$ we define for each $x \in X$, i.e.~$\{x\} \in \hpartX_0$, $\rep(\{x\})=x$ and $\rad(\{x\})=0$.
	
	We can now use the representatives to define a hierarchical cost function: for a cost function $c: \R^n \times \R^n \rightarrow \Rinf$ and two hierarchical $2^n$- trees $(\hpartX_0,\ldots,\hpartX_K)$ and $(\hpartY_0,\ldots,\hpartY_K)$ let
	\begin{align}
		c_k : \hpartX_k \times \hpartY_k \rightarrow \Rinf, \qquad
		c_k(\hcellX,\hcellY) = c(\rep(\hcellX),\rep(\hcellY))\,.
	\end{align}
	The radius will be useful for efficient construction of shielding neighbourhoods in Sect.~\ref{sec:Shielding}.
	\end{definition}
	Now the necessary ingredients are prepared to formally define the multi-scale variant of a discrete optimal transport problem.
	\begin{definition}[Multi-Scale Representation of Optimal Transport Problem]
		Let $X$, $Y$ be finite sets and $\mu \in \prob(X)$, $\nu \in \prob(Y)$.
		Let $(\hpartX_0,\ldots,\hpartX_K)$ and $(\hpartY_0,\ldots,\hpartY_K)$ be hierarchical partitions of $X$ and $Y$ with equal depth $K$,
		let $(\mu_0,\ldots,\mu_K)$ and $(\nu_0,\ldots,\nu_K)$ be multi-scale measure approximations of $\mu$ and $\nu$ over the hierarchical partitions
		and let $(c_0,\ldots,c_K)$ be a tuple of cost functions $c_k : \hpartX_k \times \hpartY_k \rightarrow \Rinf$, $k=0,\ldots,K$.
		
		Then for $k=0,\ldots,K$ we refer to the optimization problems \eqref{eq:OT} and \eqref{eq:OTDual} for the sets $\hpartX_k$, $\hpartY_k$, the marginals $\mu_k$, $\nu_k$ and the cost $c_k$ as the approximate problems at scale $k$.
		
		Note that the problem at scale $k=0$ is identical to the original problem.
	\end{definition}
	\noindent Finally, we describe how the sparse iterative Algorithm \ref{alg:ShortCutSolver} is combined with the multi-scale scheme.
	\begin{algorithm}[\funcSolveMultiScale]
		\label{alg:MultiScale}
		Input: multi-scale OT problem. Output: global optimizer of original problem.
		Notes: $\tn{\texttt{solveDense}}(k)$ refers to solving the dense problem at scale $k$,
			$\funcSolveShortCuts(k,\neigh_1)$ refers to calling Algorithm \ref{alg:ShortCutSolver} at scale $k$ with initial neighbourhood $\neigh_1$. (We are only interested in the optimal coupling $\pi$ and neglect the final shielding neighbourhood, which is also returned by $\funcSolveShortCuts$.) \newline
		\tn{
		\indent \texttt{$k \leftarrow K$} \newline
		\indent \texttt{$\pi \leftarrow \text{solveDense}(k)$} \newline
		\indent \texttt{while $k>0$:}\newline
		\indent \indent \texttt{$k \leftarrow k-1$} \newline
		\indent \indent \texttt{$\neigh_1 \leftarrow \{\}$} \newline
		\indent \indent \texttt{for $(x,y) \in \spt \pi$:} \newline
		\indent \indent \indent \texttt{$\neigh_1 \leftarrow \neigh_1 \cup (
			\text{children}(x) \times \text{children}(y))$} \newline
		\indent \indent \texttt{$\pi \leftarrow \funcSolveShortCuts(k,\neigh_1)$} \newline
		\indent \texttt{return $\pi$}
		}
	\end{algorithm}
	\begin{remark}[Validity of Algorithm \ref{alg:MultiScale}]
		For finite costs $c < \infty$ it is easy to see that the initial $\neigh_1$ constructed by Algorithm \ref{alg:MultiScale} are feasible and global optimality of the final coupling $\pi$ is inherited from Algorithm \ref{alg:ShortCutSolver}.
		
		If the cost can be infinite, there is no trivial relation between feasibility on different scales, as it depends on how mass will be distributed within the refined cells. In practice, potential infeasibility can be detected by adding `overflow' bins with a sufficiently high finite cost.
	\end{remark}
	
	\begin{remark}[Choice of Hierarchical Cost Function]
		\label{rem:HierarchicalCost}
		The choice how to define the hierarchical cost at coarser scales depends on the used algorithm. In \cite{SchmitzerSchnoerr-SSVM2013} the cost of a cell was defined recursively by taking the minimum over the cost of its children. In this way, dual feasibility on the coarse scale implied dual feasibility on the refined scale, enabling the efficient localization of violated constraints.
		
		Here we follow a different approach:
		In Definition \ref{def:2NTrees} we assigned a representative to each partition cell and defined the hierarchical cost by evaluating an underlying continuous cost at the representatives.
		This is important for solving the problems at coarser levels in Algorithm \ref{alg:MultiScale}.
		As will be discussed in Sect.~\ref{sec:Shielding}, the geometry of the problem is a key ingredient to constructing shielding neighbourhoods. Hence, we must make sure that the coarser problems still look like transport problems on the same underlying continuous space.		
		With the given setup, when solving the problem at scale $k>0$, we can just forget about all levels $0 \leq i < k$ and pretend the finest level is given by the point clouds $\{\rep(\hcellX) : \hcellX \in \hpartX_k \}$ and $\{\rep(\hcellY) : \hcellY \in \hpartY_k \}$.
		
		Hence, from now on we can always assume to solve the finest layer.
	\end{remark}


\section{Constructing Shielding Neighbourhoods}
	\label{sec:Shielding}
	The concepts of Section \ref{sec:ShortCuts} have been formulated for general cost functions. For the execution of \funcSolveShortCuts, Algorithm \ref{alg:ShortCutSolver}, we need a function $\funcShield$ that efficiently generates a sparse shielding neighbourhood for a given coupling.
	This is where the particular geometric structure of the cost function must be exploited.
	
	In this Section we discuss how $\funcShield$ can be designed for different types of ground costs. We start by describing the general outline in Sect.~\ref{sec:ShieldingGeneral}.
	The particularly important case of the squared Euclidean distance on $\R^n$ is treated in Sect.~\ref{sec:ShieldingSqrEuclidean} and the concept of short-cuts is applied to the continuous setting.
	In Sect.~\ref{sec:ShieldingConvex} we consider more general strictly convex functions on $\R^n$ and provide explicit formulas for the $p$-th power of the Euclidean distance.
	To demonstrate the generality of the concept, in Sect.~\ref{sec:ShieldingSphere}, we look at the squared geodesic distance on the sphere.
	The extension to noisy and distorted variants of the above costs is discussed in Sect.~\ref{sec:ShieldingNoise}.

	For the squared Euclidean distance we give a rigorous bound on the cardinality of the constructed neighbourhood $\neigh$ (\thref{thm:ShieldingYHatBound}), showing that it will indeed be sparse. For the other cases we provide analogous intuitive arguments.
	
	Different cost functions require different approximation techniques. Hence, this Section cannot be exhaustive and cover all possible costs. Instead we cover several important cases. By describing the underlying ideas and strategy we hope to enable the reader to transfer the results to other suitable cost functions.

\subsection{General Considerations}
	\label{sec:ShieldingGeneral}
	\paragraph{The Basic Algorithm.} We now describe the general strategy for constructing shielding neighbourhoods for a given coupling $\pi$. Let us first give an informal description.
	We start by setting $\neigh=\spt \pi$ (which is assumed to be sparse) and then try to add a small set of entries to make $\neigh$ shielding for $\pi$. The idea is to find for each $x_A \in X$ a suitably chosen small set of `shielding candidates' $\{(x_{s,i},y_{s,i})\}_i \subset \spt \pi$ such that `almost all' $y_B \in Y$ will be shielded from $x_A$ by one of the candidates. Then, per $x_A$, we only need to add a few elements to $\neigh$ and $\neigh$ will remain sparse.
	From the brief review of the continuous setting in Sect.~\ref{sec:ShortCutsIntuition} we conjecture that it is reasonable that the $X$-part of the shielding candidates should form a small geometric neighbourhood of $x_A$ in $X$ to mimic the sets $U_i$. So for each $x_A \in X$ we fix a `shielding candidate set' $\shield(x_A) \subset X$ which contains a small `discrete neighbourhood' around $x_A$. We will briefly comment on this choice for each cost in the subsequent sections.
	
	Moreover, for any $x_s \in S(x_A)$ we need to find an element $y_s \in Y$ such that $(x_s,y_s) \in \spt \pi$. To achieve this, from a given coupling $\pi$ we will extract a map $t : X \rightarrow Y$ such that $(x,t(x)) \in \spt \pi$ for all $x \in X$. Of course, in a discrete setting such a map $t$ need neither be injective nor surjective. However, this is not required for the functionality of the algorithm. Note that the extraction of the map can be performed efficiently, even for large problems, when $\pi$ is stored in a suitable sparse data structure.
	
	After having established the sets $\shield(x_A)$ and the map $t$ the algorithm can be stated more formally:
	\begin{algorithm}[\funcShield]
		\label{alg:ConstructShielding}
		Input: a coupling $\pi$.
		Output: a shielding neighbourhood $\neigh$.\newline
		\tn{
		\indent \texttt{$\neigh \leftarrow \spt \pi$} \newline
		\indent \texttt{from $\pi$ extract map $t : X \rightarrow Y$ such that $(x,t(x)) \in \spt \pi$ for all $x \in X$}
			\tn{ \hfill (\customlabel{item:ShieldAlgExtractT}{step-i}}\!\!) \newline
		\indent \texttt{for $x_A \in X$:}\newline
		\indent \indent \texttt{$\neigh \leftarrow \neigh \cup \{(x_A,t(x_s)) \colon
			x_s \in \shield(x_A) \}$}
		 \tn{ \hfill (\customlabel{item:ShieldAlgYNeigh}{step-ii}}\!\!) \newline
		\indent \indent \texttt{$\hat{Y} \leftarrow \{y_B \in Y : 
			y_B \text{ is not shielded from } x_A \text{ by } \backslash$}\newline
			\indent \indent \indent \indent \texttt{$(x_s,t(x_s)) \text{ for any } x_s \in S(x_A)
			\}$} \tn{ \hfill (\customlabel{item:ShieldAlgYLoop}{step-iii}}\!\!) \newline
		\indent \indent \texttt{$\neigh \leftarrow \neigh \cup \{(x_A,y_B) \colon y_B \in \hat{Y}\}$} \newline
		\indent \texttt{return $\neigh$}
		}
	\end{algorithm}
	It is easy to verify that for a given coupling $\pi$ this algorithm does indeed produce a valid shielding neighbourhood $\neigh$.
	
	\paragraph{Determining $\hat{Y}$.} Since (\ref{item:ShieldAlgYLoop}) is within the loop over $x_A \in X$ it would be inefficient to naively iterate over all $y_B \in Y$ and having to check the shielding condition explicitly for each pair $(x_A,y_B) \in X \times Y$ and shielding candidate $(x_s,t(x_s))$.
	Instead we will determine the set $\hat{Y}$ in a hierarchical way, making use of the hierarchical structure introduced in Sect.~\ref{sec:AlgorithmHierarchy}.
	Let $(x_A,x_s,y_s) \in X \times X \times Y$ be a triplet with $x_s \in \shield(X)$, $(x_s,y_s) \in \spt \pi$. Introduce the family of functions
	\begin{align}
		\label{eq:Psi}
		\psi_{(x_1,x_2)}(y) & = c(x_1,y) - c(x_2,y)\,. \\
		\intertext{Then $x_A$ is shielded from $y_B$ by $(x_s,y_s)$ precisely if (see \eqref{eq:ShieldingCondition})}
		\label{eq:ShieldingConditionPsi}
		\psi_{(x_A,x_s)}(y_B) & > \psi_{(x_A,x_s)}(y_s)\,.
	\end{align}
	Let now $(\hpartY_0,\ldots,\hpartY_K)$ be a hierarchical partition over $Y$. Assume we are given a hierarchical lower bound $\hat{\psi}_{(x_A,x_s)}$ of $\psi_{(x_A,x_s)}$ on $\bigcup_{k=0}^K \hpartY_k$ as follows:
	\begin{subequations}
	\begin{align}
		\hat{\psi}_{(x_A,x_s)}(\{y\}) & \eqdef \psi_{(x_A,x_s)}(y) & \qquad
			\tn{for} & \qquad y \in Y,\,\tn{i.e.~} \{y\} \in \hpartY_0\,, \\
		\label{eq:PsiHat}
		\hat{\psi}_{(x_A,x_s)}(\hcellY) & \leq \psi_{(x_A,x_s)}(y) & \qquad
			\tn{for} & \qquad \hcellY \in \hpartY_k,\,k>0,\,y \in \hcellY\,.
	\end{align}
	\end{subequations}
	So when
	\begin{align}
		\label{eq:ShieldingConditionPsiHat}
		\hat{\psi}_{(x_A,x_s)}(\hcellY) > \psi_{(x_A,x_s)}(y_s)
	\end{align}
	then all leaves of $\hcellY$ are shielded from $x_A$ by $(x_s,y_s)$.
	Consequently one can determine $\hat{Y}$ by doing a coarse-to-fine check of \eqref{eq:ShieldingConditionPsiHat} over the hierarchical partition of $Y$. We start at the coarse nodes and if the check fails recursively perform the test at finer levels until eventually the test succeeds or some leaves must be added to $\hat{Y}$. This is more formally described by the next algorithm:
	
	\begin{algorithm}[\funcSearch]
		\label{alg:FindYHat}
		Input: $x_A \in X$, set of shielding candidates $\{(x_{s,i},y_{s,i})\}_i$, current level $k \in \{0,\ldots,K\}$, current root cell $\hcellY \in \hpartY_k$.
		Output: set of missed elements $\hat{Y}$ that are leaves of $\hcellY$.
		Notes: $\funcSearch(x_A,\{(x_{s,i},y_{s,i})\}_i,k,\hcellY)$ refers to recursively calling this algorithm again with different parameters.\newline
		\tn{
		\indent \texttt{$\hat{Y} \leftarrow \{\}$} \newline
		\indent \texttt{if ($\hat{\psi}_{(x_A,x_s)}(\hcellY) \leq \psi_{(x_A,x_s)}(y_s)$ for all $(x_s,y_s) \in \{(x_{s,i},y_{s,i})\}_i$):} \newline
		\indent \indent \texttt{if $k=0$:} \newline
		\indent \indent \indent \texttt{$\hat{Y} \leftarrow \hat{Y} \cup \hcellY$} \newline
		\indent \indent \texttt{if $k>0$:} \newline
		\indent \indent \indent \texttt{for $\hcellY' \in \text{children}(\hcellY)$:}\newline
		\indent \indent \indent \indent \texttt{$\hat{Y} \leftarrow \hat{Y} \cup
			\funcSearch(x_A,\{(x_{s,i},y_{s,i})\}_i,k-1,\hcellY')$} \newline
		\indent \texttt{return $\hat{Y}$}
		}
	\end{algorithm}
	Calling this algorithm with the initial parameters $(x_A,\{(x_s,t(x_s)) \,\colon\,x_s \in \shield(x_A)\},K,Y)$ then returns the set $\hat{Y}$ as desired (recall that we defined $\hpartY_K=\{Y\}$, Definition \ref{def:HierarchicalPartition}). We will numerically verify that this hierarchical search requires significantly less calls than a naive dense search (see Fig.~\ref{fig:ExpShieldingNShielding}).
	
	We are now prepared to discuss various types of cost functions in more detail. We elaborate on how to choose $\shield(\cdot)$ such that we expect sparse sets $\hat{Y}$ and give explicit formulas for the bound $\hat{\psi}_{(\cdot,\cdot)}$. In the following Sections we assume that $X$ and $Y$ are discrete point clouds in $\R^n$, and there is a cost defined on the full continuous space $c : \R^n \times \R^n \rightarrow \R$.
	As specified in Definition \ref{def:2NTrees} $(\hpartX_0,\ldots,\hpartX_K)$ and $(\hpartY_0,\ldots,\hpartY_K)$ are hierarchical $2^n$-trees over $X$ and $Y$ with representatives, radii and a corresponding hierarchical cost function. For Sect.~\ref{sec:ShieldingSphere} we need to make some minor adaptions to the Riemannian setting.
	
\subsection[Squared Euclidean Distance on R\textasciicircum n]{Squared Euclidean Distance on $\R^n$}
	\label{sec:ShieldingSqrEuclidean}
	The squared Euclidean distance is perhaps the most prominent cost for optimal transport. It also allows a particularly simple geometric interpretation of the shielding condition.
	
	In this section let $c(x,y) = |x-y|^2$.
	Then the shielding condition \eqref{eq:ShieldingCondition}, see also \eqref{eq:ShieldingConditionPsi}, for a triplet $x_A$, $(x_s,y_s)$ and $y_B$ is equivalent to
	\begin{align}
		\label{eq:ShieldingEuclideanHyperplane}
		\psi_{(x_A,x_s)}(y_B) - \psi_{(x_A,x_s)}(y_s) > 0
		\qquad \Leftrightarrow \qquad
		\la x_s-x_A, y_B-y_s \ra > 0\,.
	\end{align}
	Consider the hyperplane through $y_s$, normal to $x_s-x_A$. Then $(x_s,y_s)$ shields $x_A$ from all $y_B$ that lie on the side facing in direction $x_s-x_A$.
	For a given $x_A \in X$ the set of points $y \in \R^n$ for which \eqref{eq:ShieldingEuclideanHyperplane} is false for all $\{(x_s,y_s = t(x_s)) : x_s \in \shield(x_A)\}$ is given by the polytope $P$ with faces through $y_s$ and outward normals $x_s - x_A$. Then $\hat{Y} = P \cap Y$ (for the map $t$ and the set $\hat{Y}$ see Algorithm \ref{alg:ConstructShielding}, \ref{item:ShieldAlgExtractT} and \ref{item:ShieldAlgYLoop}). An illustration of this is given in Fig.~\ref{fig:ShieldingEuclidean}.

	\begin{figure}
		\centering
		\includegraphics{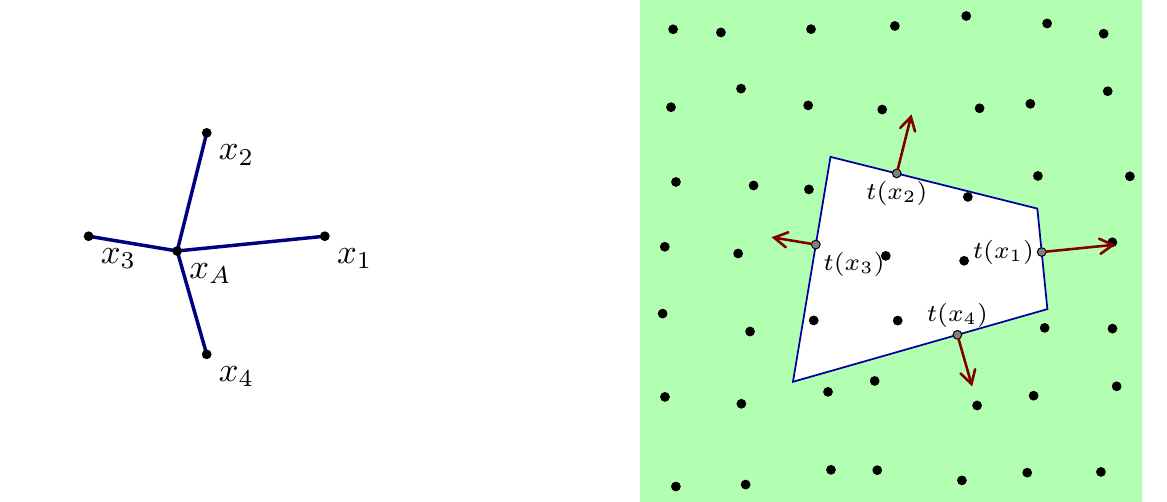}
		\caption{Illustrating the shielding condition for the squared Euclidean distance. \textbf{Left:} A point $x_A \in X$ with four points $\{x_1,x_2,x_3,x_4\} = \shield(x_A)$. \textbf{Right:} Point cloud $Y$ (black dots), the faces of the polytope $P$ (blue line) go through the points $t(x_i)$ with outward normals $x_i - x_A$, $i=1,\ldots,4$ (red arrows). $x_A$ is shielded from all points outside of $P$ (green area) by some $(x_i,t(x_i))$, $x_i \in \shield(x_A)$.}
		\label{fig:ShieldingEuclidean}
	\end{figure}

	The next proposition formalizes that under `plausible' regularity assumptions on $X$, $Y$ and the current coupling candidate $\pi$ the cardinality of $\hat{Y}$ is $\mc{O}(1)$ w.r.t.\ the cardinality of $Y$ and thus the cardinality of the generated shielding neighbourhood will be $\mc{O}(|X|)$ (as long as $|\shield(x_A)| = \mc{O}(1)$).
	\begin{proposition}[Bound on Cardinality of $\hat{Y}$]
		\thlabel{thm:ShieldingYHatBound}
		Let $X$ and $Y \subset \R^n$ be finite sets, $\pi$ a coupling on $\prob(X \times Y)$ and $\shield : X \rightarrow 2^X$ be an assignment of shielding candidate sets that satisfy:
		\begin{enumerate}[label=(\roman*)]
			\item There is a constant $q \in (0,1]$ such that for every $x_A \in X$ and any $v \in \R^n$ there is some $x_s \in \shield(x_A)$ such that
			\begin{align}
				\la v, x_s - x_A \ra \geq |v| \cdot |x_s-x_A| \cdot q\,.
			\end{align}
			$q$ can be interpreted as the cosine of the half of the maximal angle between any two $x_{1}-x_A$ and $x_{2}-x_A$ for $x_{1}$, $x_{2} \in \shield(x_A)$. \label{item:YHatBoundQ}
			\item There is a bound $0 < D < \infty$ such that for any $x_A \in X$ and any $x_s \in \shield(x_A)$ we have $|x_A-x_s| < D$. \label{item:YHatBoundD}
			\item There is a constant $0 < \rho < \infty$ such that for any ball $B_R \subset \R^n$ of radius $R>0$ we have $|Y \cap B_R| \leq \rho \cdot \vol_n(B_R)$. Where $\vol_n(B_R)$ gives the $n$-dimensional volume of the ball. The constant $\rho$ can be interpreted as an approximate upper bound on the point density in $Y$.
			\label{item:YHatBoundRho}
			\item The coupling $\pi$ is spatially regular in the sense that one can extract a map $t$ from $\pi$ (cf.~Algorithm \ref{alg:ConstructShielding}, \ref{item:ShieldAlgExtractT}) such that there is a constant $0 < L < \infty$ with $|t(x_1)-t(x_2)| \leq L \cdot |x_1-x_2|$ for all $x_1$, $x_2 \in X$. \label{item:YHatBoundPi}
		\end{enumerate}
		Then there is a constant $0 < C < \infty$ such that $\hat{Y} < C$ for all $x_A \in X$.
	\end{proposition}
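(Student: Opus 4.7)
The plan is to prove that every $y_B \in \hat{Y}$ lies in a Euclidean ball of bounded radius around $t(x_A)$; assumption \ref{item:YHatBoundRho} then bounds $|\hat{Y}|$ by a constant independent of $x_A$.

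First, I would unpack what membership in $\hat{Y}$ means. By the geometric reformulation \eqref{eq:ShieldingEuclideanHyperplane} of the shielding condition for $c(x,y)=|x-y|^2$, a point $y_B$ fails to be shielded by $(x_s,t(x_s))$ precisely when $\langle x_s-x_A, y_B - t(x_s)\rangle \leq 0$. Thus $y_B \in \hat{Y}$ means this inequality holds simultaneously for every $x_s \in \shield(x_A)$. Set $v \eqdef y_B - t(x_A)$; the goal is to bound $|v|$. If $v=0$ there is nothing to do, so assume $|v|>0$.

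Next, I would select a specific shielding candidate tailored to $v$. By assumption \ref{item:YHatBoundQ} applied to this $v$, there exists $x_s \in \shield(x_A)$ such that
\begin{equation*}
   \langle v, x_s - x_A\rangle \geq q\,|v|\,|x_s-x_A|\,.
\end{equation*}
For this $x_s$, the non-shielding inequality $\langle x_s-x_A, y_B - t(x_s)\rangle \leq 0$ can be rewritten by adding and subtracting $t(x_A)$ as
\begin{equation*}
   \langle x_s-x_A, v\rangle \leq \langle x_s-x_A, t(x_s)-t(x_A)\rangle\,.
\end{equation*}
Bounding the right-hand side via Cauchy--Schwarz together with the Lipschitz property \ref{item:YHatBoundPi} gives $\langle x_s-x_A, t(x_s)-t(x_A)\rangle \leq L\,|x_s-x_A|^2$. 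Combining with the lower bound on $\langle v, x_s-x_A\rangle$ and dividing by $|x_s-x_A|>0$ yields $q\,|v| \leq L\,|x_s-x_A|$, and then assumption \ref{item:YHatBoundD} gives $|v|\leq LD/q$.

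Finally, this shows $\hat{Y} \subset B_R(t(x_A))$ with $R = LD/q$ independent of $x_A$, and assumption \ref{item:YHatBoundRho} gives $|\hat{Y}| \leq \rho\,\vol_n(B_R)$, so we may take $C = \rho\,\vol_n(B_{LD/q})$. I do not foresee any real obstacle; the only subtle step is rewriting the non-shielding inequality so that the Lipschitz bound on $t$ can be used, but this is a one-line manipulation. The proof essentially says: within $\shield(x_A)$ one can always find a direction $x_s-x_A$ that is approximately aligned with $v = y_B - t(x_A)$, and along such a direction the Lipschitz regularity of $t$ forces $y_B$ to be near $t(x_A)$.
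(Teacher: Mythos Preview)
Your proposal is correct and follows essentially the same route as the paper's proof: apply assumption \ref{item:YHatBoundQ} to the vector $v=y_B-t(x_A)$, use the Lipschitz bound \ref{item:YHatBoundPi} together with \ref{item:YHatBoundD} to control the discrepancy between $t(x_s)$ and $t(x_A)$, and conclude that $\hat{Y}$ is contained in the ball of radius $LD/q$ around $t(x_A)$, whence \ref{item:YHatBoundRho} gives the bound. The only cosmetic difference is that the paper argues in the direct direction (showing that $|v|>LD/q$ forces a shield) while you argue contrapositively (starting from $y_B\in\hat{Y}$ and bounding $|v|$); the computations are identical.
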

	\begin{proof}
		Let $t$ be a map extracted from $\pi$ with Lipschitz constant $L< \infty$.
		For a given $x_A$ and any $y_B \in \R^n$ there is some $x_s \in \shield(x_A)$ such that
		\begin{align*}
			\la y_B-t(x_s), x_s-x_A \ra & = \la y_B - t(x_A) + t(x_A) - t(x_s), x_s - x_A \ra \\
			& \stackrel{\ref{item:YHatBoundQ},\ref{item:YHatBoundPi}}{\geq} |y_B-t(x_A)| \cdot |x_s-x_A| \cdot q - L\,|x_s-x_A|^2\\
			& \stackrel{\ref{item:YHatBoundD}}{\geq} (|y_B-t(x_A)| \cdot q - L \cdot D) \cdot |x_s-x_A|
		\end{align*}
		So for $|y_B-t(x_A)| > L \cdot D / q$ this is necessarily positive and thus the polytope $P$ of $y_P$ for which no shield exists must be contained in the closed ball $B(L \cdot D / q,t(x_A))$ of radius $L \cdot D / q$ around $t(x_A)$. Consequently, by \ref{item:YHatBoundRho}, $|\hat{Y}| \leq \rho \cdot \vol_n(B(L \cdot D / q,t(x_A)))$ which does not depend on $|Y|$ or $|X|$.
	\end{proof}
	\begin{remark}[Interpretation of \thref{thm:ShieldingYHatBound}]
		\label{rem:ShieldingYHatBoundInterpretation}
		Assumptions 	\ref{item:YHatBoundQ} to \ref{item:YHatBoundRho} depend only on $X$ and $Y$ and are rather `realistic'. They are met by all examples in this article (with some exceptions for $x_A$ at the `boundary' of $X$, see also Remark \ref{rem:ShieldingCartesianGrids}). Moreover assumptions \ref{item:YHatBoundQ} and \ref{item:YHatBoundD} provide useful guidance on how to choose $\shield(\cdot)$.
		
		For a given $\pi$ one can determine a suitable constant $L$ for \ref{item:YHatBoundPi} and thus in principle bound the size of $\neigh$ and consequently the complexity of the subsequent sparse problem. However, to estimate the full complexity of Algorithm \ref{alg:ShortCutSolver} one would need to fix $L$ in advance, hence an a priori estimate on the regularity of $\pi$ is required. This is considerably more difficult and therefore in this article we refrain from attempting to give a complete rigorous complexity analysis of the full multi-scale scheme involving Algorithms \ref{alg:ShortCutSolver} and \ref{alg:MultiScale}. See also Sect.~\ref{sec:Conclusion} for a discussion of the complexity.
	\end{remark}
	
	Independent of \thref{thm:ShieldingYHatBound} one can use the following function for the hierarchical search of $\hat{Y}$ outlined in the previous section (Algorithm \ref{alg:FindYHat}).
	\begin{proposition}
		\thlabel{thm:ShieldingPsiEuclidean}
		For a partition cell $\hcellY \in \hpartY_k$, $k=1,\ldots,K$ a hierarchical lower bound for $\psi_{(x_A,x_s)}$ is given by
	\begin{align}
		\hat{\psi}_{(x_A,x_s)}(\hcellY) & = \psi_{(x_A,x_s)}(\rep(\hcellY)) - 2\,|x_s-x_A|\,\rad(\hcellY)\,.
	\end{align}
	\end{proposition}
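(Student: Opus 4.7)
The plan is a short direct computation. The key observation is that for the squared Euclidean cost, $\psi_{(x_A,x_s)}$ is affine in $y$, so I can control it on a ball of radius $\rad(\hcellY)$ around $\rep(\hcellY)$ by a single linear estimate.

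First I would expand using $c(x,y) = |x-y|^2$:
\begin{align*}
	\psi_{(x_A,x_s)}(y) = |x_A-y|^2 - |x_s-y|^2 = |x_A|^2 - |x_s|^2 + 2\,\la x_s - x_A,\,y\ra\,,
\end{align*}
which shows that $\psi_{(x_A,x_s)}$ is an affine function of $y$ with gradient $2(x_s-x_A)$.

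Next, for any $y \in \hcellY$, using the representative/radius property that $|y - \rep(\hcellY)| \leq \rad(\hcellY)$ (from Definition \ref{def:2NTrees}), I would compute
\begin{align*}
	\psi_{(x_A,x_s)}(y) - \psi_{(x_A,x_s)}(\rep(\hcellY))
	= 2\,\la x_s - x_A,\,y - \rep(\hcellY)\ra
	\geq - 2\,|x_s - x_A|\,\rad(\hcellY)\,,
\end{align*}
where the final inequality uses Cauchy--Schwarz followed by the radius bound. Rearranging yields exactly $\psi_{(x_A,x_s)}(y) \geq \hat\psi_{(x_A,x_s)}(\hcellY)$, which is the claim \eqref{eq:PsiHat}.

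There is no serious obstacle here: the argument is a one-line application of Cauchy--Schwarz once the affine structure of $\psi_{(x_A,x_s)}$ is made explicit. The only thing worth emphasising is why this bound is useful in practice: since the gap $2|x_s-x_A|\,\rad(\hcellY)$ shrinks linearly with the cell radius, the hierarchical test \eqref{eq:ShieldingConditionPsiHat} becomes tight quickly as one descends the $2^n$-tree, so Algorithm \ref{alg:FindYHat} terminates after visiting only few coarse cells whenever $\hat Y$ is actually small (as quantified by \thref{thm:ShieldingYHatBound}).
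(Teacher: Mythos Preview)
Your proof is correct and essentially identical to the paper's: both expand $\psi_{(x_A,x_s)}(y)$ to reveal its affine dependence on $y$, then apply Cauchy--Schwarz together with the bound $|y-\rep(\hcellY)|\leq\rad(\hcellY)$. The only difference is cosmetic (you write the linear term as $2\la x_s-x_A,y\ra$ whereas the paper writes $-2\la y,x_A-x_s\ra$), and your closing remark about tightness is extra commentary rather than part of the argument.
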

	\begin{proof}
	For some $y \in \hcellY$ check:
	\begin{align*}
		\psi_{(x_A,x_s)}(y) & = |x_A-y|^2 - |x_s-y|^2 = |x_A|^2 - |x_s|^2 - 2 \la y,x_A-x_s \ra \\
		& = |x_A|^2 - |x_s|^2 - 2 \la \rep(\hcellY),x_A-x_s \ra - 2 \la y-\rep(\hcellY),x_A-x_s \ra \\
		& = \psi_{(x_A,x_s)}(\rep(\hcellY)) - 2 \la y-\rep(\hcellY), x_A-x_s \ra \\
		& \geq \psi_{(x_A,x_s)}(\rep(\hcellY)) - 2\,|y-\rep(\hcellY)| \cdot |x_A-x_s| \geq \hat{\psi}_{(x_A,x_s)}(\hcellY)
	\end{align*}
	Therefore condition \eqref{eq:PsiHat} is satisfied.
	\end{proof}
	Intuitively, for the representative $\rep(\hcellY)$ one has to take into account an additional margin proportional to $\rad(\hcellY)$ to make sure all potential leaves of $\hcellY$ are on the right side of the hyperplane defined by \eqref{eq:ShieldingEuclideanHyperplane}.

	\begin{remark}[Particular Case: Cartesian Grids]
	\label{rem:ShieldingCartesianGrids}
	In many applications the discrete sets $X$ and $Y$ are not just random point clouds but lie on a Cartesian grid. Then $\hat{Y}$ can be determined directly via the grid structure without a hierarchical search.
	
	For simplicity assume for now $n=2$, higher dimensions work analogously. Assume $X$, $Y \subset \Z^2 \subset \R^2$ are regular orthogonal grids:
	\begin{align*}
		X = \{0, \ldots, N_{X,1} \} \times \{0, \ldots, N_{X,2}\}\,,\qquad
		Y = \{0, \ldots, N_{Y,1} \} \times \{0, \ldots, N_{Y,2}\}
	\end{align*}
	for some positive integers $N_{Z,i}$, $Z=X,Y$, $i=1,2$.
	For every $x_A \in X$ let $\shield(x_A)$ be the 4-neighbourhood of $x_A$ on the grid $X$ (potentially incomplete at boundaries and corners). Then for any $x_A$ with a complete neighbourhood one has
	\begin{align*}
		\{ x_s - x_A : x_s \in \shield(x_A) \} = \left\{
			\begin{pmatrix} 1 \\ 0 \end{pmatrix},
			\begin{pmatrix} 0 \\ 1 \end{pmatrix},
			\begin{pmatrix} -1 \\ 0 \end{pmatrix},
			\begin{pmatrix} 0 \\ -1 \end{pmatrix}
			\right\}
	\end{align*}
	and consequently the polytope $P$ for which condition \eqref{eq:ShieldingEuclideanHyperplane} is false for all $\{(x_s,y_s = t(x_s)) : x_s \in \shield(x_A)\}$ is a grid-aligned rectangle with sides going through the points $y_s$. This set can be accessed directly without a search by using the grid structure.
	
	Up to the points at the `boundary' of $X$ that do not have a full 4-neighbourhood, this set-up clearly satisfies assumptions \ref{item:YHatBoundQ} to \ref{item:YHatBoundRho} of \thref{thm:ShieldingYHatBound}.
	Note further that the subset of $X$ with incomplete 4-neighbourhoods is small compared to $X$ for large grids. Therefore the effect of the boundaries on the size of $\neigh$ is bounded. In practice, for a well-chosen initial coupling $\pi$ one usually has that mass from the boundary of $X$ is transported to the proximity of the corresponding boundary on $Y$, hence the unbounded side of $P$ does not have much overlap with $Y$. 
	\end{remark}	

	\begin{remark}[Application to the Continuous Problem]
		\label{rem:ShieldingEuclideanContinuous}
		We now return to the discussion in Sect.~\ref{sec:ShortCutsIntuition} and apply the concept of short-cuts to the continuous problem in $\R^n$. Given a transport map $T$, locally optimal on all patches $U_i \times V_i$, let the tuple of points $(y_1=y_A,\ldots,y_n=y_B)$, $y_i \in \spt \nu$ be taken from a straight line  between $y_A$ and $y_B$ in monotone order and sampled sufficiently fine such that every two successive points $y_i$, $y_{i+1}$ lie in the same patch $V_i$. Pick $x_i$ such that $T(x_i)=y_i$.
		
		Since $T$ is locally optimal on all $U_i \times V_i$ one has for $i=1,\ldots,n-2$
		\begin{align*}
			c(x_i,y_i) + c(x_{i+1},y_{i+1}) & \leq c(x_i,y_{i+1}) + c(x_{i+1},y_i) \\
			\Leftrightarrow \la x_{i+1}-x_i,y_{i+1}-y_i \ra & \geq 0 \\
			\Leftrightarrow \la x_{i+1}-x_i,y_{n}-y_{i+1} \ra & \geq 0 \qquad \tn{(since $y_n-y_{i+1}$ and $y_{i+1}-y_i$ are co-linear)} \\
			\Leftrightarrow 	c(x_i,y_{i+1}) + c(x_{i+1},y_{n}) & \leq c(x_i,y_{n}) + c(x_{i+1},y_{i+1})\,.
		\end{align*}
		It then follows that $(x_{i+1},y_{i+1})$ is shielding $x_i$ from $y_n$ for $i=1,\ldots,n-2$ (see \eqref{eq:ShieldingCondition}, \eqref{eq:ShieldingEuclideanHyperplane} and Remark \ref{rem:ShieldingLEQ}) and consequently the tuple $((x_2,y_2),\ldots,(x_{n-1},y_{n-1}))$ is a short-cut for $(x_1,y_n)$. Therefore the transport map $T$ is optimal.
		
		We see that for the squared Euclidean distance the shielding condition follows from local optimality along straight lines, which explains why local optimality is still sufficient in 1-dimensional discrete problems. In higher dimensions we cannot always jump along straight lines between grid points and even small deviations may break the shielding condition. This is why we must explicitly keep track of $\pi$ throughout Sections \ref{sec:ShortCuts} and \ref{sec:Shielding} and carefully choose the discrete equivalent of $U_i$ and $V_i$.
	\end{remark}

\subsection[Strictly Convex Functions on R\textasciicircum n]{Strictly Convex Functions on $\R^n$}
	\label{sec:ShieldingConvex}
	As discussed in Sect.~\ref{sec:IntroBackground}, Brenier's polar factorization theorem has been generalized to a large class of other convex costs. We now sketch how to construct shielding neighbourhood for strictly convex functions, discuss why we expect $\hat{Y}$ to be small and give an explicit formula for the hierarchical bound $\hat{\psi}_{(x_A,x_s)}$ (see \eqref{eq:PsiHat} and Algorithm \ref{alg:FindYHat}) for the $p$-th power of the Euclidean distance, $p \in (1,\infty)$.
	
	Throughout this section let $c(x,y) = h(x-y)$ for a strictly convex function $h : \R^n \rightarrow \R$. Then $\psi_{(x_A,x_s)}$, cf.~\eqref{eq:Psi}, is given by:
	\begin{align}
		\psi_{(x_A,x_s)}(y) & = h(x_A-y) - h(x_s-y)
	\end{align}
		This difference will not always have a simple closed form as for $c(x,y)=|x-y|^2$. Hence, we find a simpler approximate expression by means of the (strict) sub-gradient inequality:
	\begin{subequations}
	\label{eq:ShieldingConvexSubgradientBoth}
	\begin{align}
		\label{eq:ShieldingConvexSubgradient}
		\psi_{(x_A,x_s)}(y) & > \la \xi, x_A-x_s \ra \qquad \tn{with} \qquad
			\xi \in \partial h(x_s-y) \\
		\psi_{(x_A,x_s)}(y) & < \la \xi, x_A-x_s \ra \qquad \tn{with} \qquad
			\xi \in \partial h(x_A-y)
	\end{align}
	\end{subequations}
	A sufficient condition for $(x_s,y_s)$ shielding $x_A$ from $y_B$ is therefore:
	\begin{align}
		\label{eq:ShieldingConvexSufficient}
		\la \xi_1 - \xi_2, x_A-x_s \ra \geq 0 \qquad \tn{with} \qquad
			\xi_1 \in \partial h(x_s-y_B),
			\xi_2 \in \partial h(x_A-y_s)
	\end{align}
	
	\begin{remark}[Sparsity of $\hat{Y}$]
	We now give an informal analogue to \thref{thm:ShieldingYHatBound}. For every $x_A \in X$ let $\shield(x_A)$ be such that for any $v \in \R^n$ we can decompose 
	\begin{align}
		\label{eq:ShieldingConvexVDecomposition}
		v = |v|\,\sum_{x_s \in \shield(x_A)} \lambda(x_s) \cdot (x_A - x_s)
	\end{align}
	for non-negative coefficients $\lambda(x_s)$.
	For fixed $x_A$ and $y_B$ by monotonicity of the sub-differential we have
	\begin{align}
		\label{eq:ShieldingConvexPreCondition}
		\la \xi_1 - \xi_2, t(x_A)-y_B \ra & \geq 0 \qquad \tn{with } \qquad
		\xi_1 \in \partial h(x_A-y_B),
		\xi_2 \in \partial h(x_A-t(x_A)). \\
		\intertext{Decomposing $t(x_A)-y_B$ with \eqref{eq:ShieldingConvexVDecomposition} we obtain}
		\label{eq:ShieldingConvexPreConditionB}
		\la \xi_1 - \xi_2, x_A-x_s \ra & \geq 0
	\end{align}
	for some $x_s \in \shield(x_A)$.
	One could now assume some form of uniform convexity of $h$ (see for example \cite{ConvexFunctionalAnalysis-11}) and a lower bound on the maximal coefficient $\lambda(\cdot)$ to obtain a finite lower bound in inequality \eqref{eq:ShieldingConvexPreConditionB}, depending on $|t(x_A)-y_B|$ and conversely assume some local regularity of the sub-differential to bound the error inflicted by choosing $\xi_1 \in \partial h(x_s-y_B)$ and $\xi_2 \in \partial h(x_A-t(x_s))$ to get from (\ref{eq:ShieldingConvexPreCondition},\ref{eq:ShieldingConvexPreConditionB}) to \eqref{eq:ShieldingConvexSufficient}.
	However, in view of Remark \ref{rem:ShieldingYHatBoundInterpretation} this would have little further consequences. Therefore we content ourselves here with the intuitive argument that in practice $\hat{Y}$ will be small when $h$ is `reasonably regular and convex', which will later be confirmed numerically in Sect.~\ref{sec:NumericsPEucl}. Additional insight can be gained from Fig.~\ref{fig:ShieldingPEuclIllustration}.
	\end{remark}
	
	To actually construct shielding neighbourhoods one now needs to find a lower bound for \eqref{eq:ShieldingConvexSubgradient} on the hypercubes of the hierarchical $2^n$-tree.
	We give an explicit formula for $p$-th powers of the Euclidean distance, $p \in (1,\infty)$, but in principle this can be extended to other cost functions covered in \cite{McCannGangboOTGeometry1996}.
	\begin{proposition}
		\thlabel{thm:ShieldingPsiPEucl}
		Let $h(x-y)=|x-y|^p$, $p\in (1,\infty)$. For a partition cell $\hcellY \in \hpartY_k$, $k=1,\ldots,K$ a hierarchical lower bound for $\psi_{(x_A,x_s)}$ is given by
		\begin{align}
			\label{eq:ShieldingConvexPEuclSummary}
			\hat{\psi}_{(x_A,x_s)}(\hcellY) & = p\,R^{p-1} \cdot |x_A-x_s| \cdot \cos(\varphi) \\
			\intertext{where}
			\varphi & = \min\{ \pi, \measuredangle(x_A-x_s,x_s-\rep(\hcellY)) + \theta \} \\
			\theta & = \begin{cases}
				\arcsin(\rad(\hcellY)/|x_s-\rep(\hcellY)|) & \tn{for } \rad(\hcellY)<|x_s-\rep(\hcellY)| \\
				\pi & \tn{else}
				\end{cases} \\
			R & = \begin{cases}
				\max\{0, |x_s-\rep(\hcellY)|-\rad(\hcellY)\} & \tn{for } \cos(\varphi)\geq 0 \\
				|x_s-\rep(\hcellY)|+\rad(\hcellY) & \tn{else.}
				\end{cases}
		\end{align}
	\end{proposition}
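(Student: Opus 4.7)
The plan is to apply the sub-gradient inequality \eqref{eq:ShieldingConvexSubgradient} to the specific choice $h(z)=|z|^p$ and then minimize the resulting quantity over $y \in \hcellY$ by separating the radial and angular degrees of freedom. Since $p>1$, the function $h$ is differentiable with $\nabla h(z) = p|z|^{p-2} z$, so \eqref{eq:ShieldingConvexSubgradient} yields
\begin{align*}
    \psi_{(x_A,x_s)}(y) \,\geq\, p\,|x_s-y|^{p-2}\,\langle x_s-y,\,x_A-x_s\rangle
    \,=\, p\,|x_s-y|^{p-1}\,|x_A-x_s|\,\cos\alpha(y),
\end{align*}
where $\alpha(y) := \measuredangle(x_s-y,\,x_A-x_s)$. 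The task is then to minimize the right-hand side over $y$ with $|y-\rep(\hcellY)|\le\rad(\hcellY)$.

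\noindent\textbf{Geometric control of the angle.} First I would bound $\alpha(y)$ from above. The direction $y-x_s$ sweeps a (solid) cone with apex at $x_s$, axis $\rep(\hcellY)-x_s$, and half-angle at most $\theta$, where $\theta = \arcsin(\rad(\hcellY)/|x_s-\rep(\hcellY)|)$ is the angle subtended at $x_s$ by the tangents to the ball $B(\rep(\hcellY),\rad(\hcellY))$; when $x_s$ lies inside that ball the cone degenerates to all of $\mathbb{R}^n$, which corresponds to the case $\theta=\pi$ in the statement. By the triangle inequality for angles in $\R^n$ (applied in the $2$-plane spanned by $x_A-x_s$ and $x_s-\rep(\hcellY)$, with the cone deviation projected in), we then obtain
\begin{align*}
    \alpha(y) \,\leq\, \measuredangle(x_A-x_s,\,x_s-\rep(\hcellY))+\theta,
\end{align*}
capped at $\pi$, which is exactly $\varphi$. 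Since $\cos$ is decreasing on $[0,\pi]$ this gives $\cos\alpha(y)\geq\cos\varphi$.

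\noindent\textbf{Control of the radius and sign case split.} The radius $r(y):=|x_s-y|$ lies in the interval $[\max\{0,|x_s-\rep(\hcellY)|-\rad(\hcellY)\},\ |x_s-\rep(\hcellY)|+\rad(\hcellY)]$, and $r\mapsto r^{p-1}$ is monotone increasing for $p>1$. The product $r^{p-1}\cos\alpha$ must now be bounded below, and the correct endpoint of the radius interval depends on the sign of $\cos\varphi$: when $\cos\varphi\geq 0$ the whole expression is non-negative and is minimized by taking $r$ as small as possible, so $R$ is the lower endpoint; when $\cos\varphi<0$ the expression is negative and is most negative for $r$ as large as possible, so $R$ is the upper endpoint. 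In either case one reads off
\begin{align*}
    p\,r(y)^{p-1}\,|x_A-x_s|\,\cos\alpha(y) \,\geq\, p\,R^{p-1}\,|x_A-x_s|\,\cos\varphi = \hat{\psi}_{(x_A,x_s)}(\hcellY),
\end{align*}
which combined with the first display verifies \eqref{eq:PsiHat}.

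\noindent\textbf{Main obstacle.} The routine part is the radial/sign split; the delicate step is the geometric angle bound $\alpha(y)\leq\beta+\theta$ in arbitrary dimension together with the correct handling of the edge cases $x_s$ inside the ball around $\rep(\hcellY)$ and $\beta+\theta>\pi$. I would treat these by reducing to the $2$-plane through $x_s$ containing both $x_A-x_s$ and $\rep(\hcellY)-x_s$ and observing that for fixed $|x_s-y|$ the angle $\alpha(y)$ is maximized when $y-x_s$ lies in this plane on the far side of the axis from $x_A-x_s$; the tangent-half-angle computation then gives the stated $\theta$, and the $\min\{\pi,\cdot\}$ truncation absorbs the degenerate configurations.
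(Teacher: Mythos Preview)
Your proposal is correct and mirrors the paper's own proof: apply the subgradient inequality \eqref{eq:ShieldingConvexSubgradient} with $\nabla h(z)=p\,|z|^{p-1}n(z)$, bound the angle $\alpha(y)$ by $\varphi$ via the triangle inequality for angles (the paper phrases this as the triangle inequality on the $2$-sphere) combined with the tangent half-angle $\theta=\arcsin(\rad(\hcellY)/|x_s-\rep(\hcellY)|)$, and then pick the radial endpoint for $R$ according to the sign of $\cos\varphi$. The only cosmetic difference is that you suggest justifying the angle estimate by reducing to a $2$-plane, whereas the paper simply invokes the spherical triangle inequality directly.
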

	Here $\measuredangle(a,b)=\arccos(\la a,b \ra/(|a| \cdot |b|))$ denotes the (unsigned) angle between the two vectors $a$, $b \in \R^n$. This formula is not very handy for further analytic manipulation but it can readily be implemented for usage in Algorithm \ref{alg:FindYHat}. A proof of \thref{thm:ShieldingPsiPEucl} is given in Appendix \ref{apx:Proofs}.

	\begin{figure}
		\centering %
		\includegraphics[]{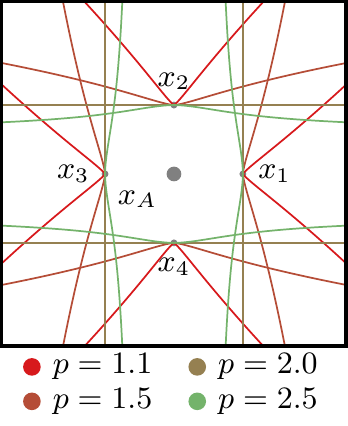} %
		\caption{Illustration of the shielding condition for $c(x,y)=|x-y|^p$ for various $p$ for a simple geometric set-up. %
			For $x_A=(0,0)$ and $(x_1,x_2,x_3,x_4)= (y_1,y_2,y_3,y_4) = ((1,0),(0,1),(-1,0),(0,-1))$ the coloured lines show for which $y_B$ the pair $(x_A,y_B)$ is shielded by one of the $(x_i,y_i)$, $i=1,\ldots,4$. For $p=2$ the boundaries are given by straight lines (cf.~Sect.~\ref{sec:ShieldingSqrEuclidean}), for $p>2$ the curves are bent `inwards', for $p<2$ `outwards'. As $p \searrow 1$ the area of shielded elements becomes smaller. This could be remedied by adding additional shielding candidates at intermediate angles.	
			For more general geometric set-ups the situation is more complicated, but the general behaviour remains.
		} %
		\label{fig:ShieldingPEuclIllustration} %
	\end{figure}

\subsection{Squared Geodesic Distance on Sphere}
	\label{sec:ShieldingSphere}
	There is also an extension of the polar factorization to compact Riemannian manifolds (see Sect.~\ref{sec:IntroBackground}). To demonstrate the flexibility of our framework, we now describe how to construct shielding neighbourhoods on the 2-dimensional unit sphere for the squared geodesic distance as cost.
	
	Let $\sphere$ be the 2-sphere in $\R^3$, denote by $d : \sphere \times \sphere \rightarrow \R_+$ the Riemannian geodesic distance on $\sphere$ and let $X$, $Y \subset \sphere$ be finite subsets of the sphere. 
	
	We need to slightly adapt the $2^n$-tree scheme for $\R^n$ as introduced in Def.~\ref{def:2NTrees}. Start by generating a hierarchical $2^3$-tree (octree) $(\hpartX_0,\ldots,\hpartX_K)$ over $X \subset \sphere \subset \R^3$ with representatives at the cube-centers as before. Then for each partition cell $\hcellX$ its representative $\rep(\hcellX)\in \R^3$ is projected onto $\sphere$ by normalizing its length $\rep(\hcellX) \leftarrow \rep(\hcellX)/|\rep(\hcellX)|$. Moreover, assign to each representative a metric radius $\rad(\hcellX)$ such that $d(\rep(\hcellX),x) \leq \rad(\hcellX)$ for all $x \in \hcellX$.
	Likewise, construct a hierarchical partition $(\hpartY_0,\ldots,\hpartY_K)$ with projected representatives and metric radii over $Y$.
	
	Then as before define
	\begin{align}
		c_k : \hpartX_k \times \hpartY_k \rightarrow \R\,, \qquad
		c_k(\hcellX,\hcellY) = d(\rep(\hcellX),\rep(\hcellY)))^2\,.
	\end{align}
	
	\begin{remark}[Sparsity of $\hat{Y}$]
	Let us again make an informal argument for why we expect that there exists a sparse shielding neighbourhood. For fixed $x_A \in X$ and $y_B \in Y$ consider the following function:
	\begin{align}
		F & : X \times Y \rightarrow \R, & (x_s,y_s) & \mapsto [c(x_A,y_B)-c(x_s,y_B)]-[c(x_A,y_s)-c(x_s,y_s)]
	\end{align}
	The pair $(x_s,y_s)$ shields $x_A$ from $y_B$ iff $F(x_s,y_s)>0$ (see \eqref{eq:ShieldingCondition}).
	Note that $F(x_A,t(x_A))=0$. Now try to find some $(x_s,t(x_s))$ with $F(x_s,t(x_s))>0$. We assume that $x_s$ is close to $x_A$ and by regularity of $t$ that $t(x_s)$ is close to $t(x_A)$. Consequently we do a first order expansion of $F$ in the tangent spaces of $x_A$ and $t(x_A)$. Note that $\nabla_{y_s} F(x_A,y_s)|_{y_s=t(x_A)}=0$ so the first order variation w.r.t.\ $y_s$ vanishes at $y_s=t(x_A)$. For the gradient in the first argument we find:
	\begin{align}
		\nabla_{x_s} F(x_s,t(x_A))|_{x_s=x_A} & = 2 \left[ \log_{x_A}(y_B) - \log_{x_A}(t(x_A)) \right]
	\end{align}
	where $\log_{x_A}$ denotes the logarithmic map on $\sphere$ that assigns to a point $y \in \sphere$ the vector in the tangent space at $x_A$ that induces the geodesic which reaches $y$ at time $1$. Ignoring issues like the cut locus we find that if $\shield(x_A)$ approximates the tangent space at $x_A$ sufficiently such that we can choose some $x_s$ lying in the direction given by $\log_{x_A}(y_B) - \log_{x_A}(t(x_A))$, we have found a shielding pair $(x_s,t(x_s))$.
	\end{remark}
	
	After this intuitive argument we turn to the construction of a hierarchical bound for Algorithm \ref{alg:FindYHat}. We approximate $\psi_{(\cdot,\cdot)}$ again via the sub-gradient inequality (cf.~\eqref{eq:Psi} and \eqref{eq:ShieldingConvexSubgradientBoth}):
	\begin{align}
		\psi_{(x_A,x_s)}(y) & = d(x_A,y)^2-d(x_s,y)^2 \\
		\psi_{(x_A,x_s)}(y_B) & \geq \xi \cdot ( d(x_A,y_B) - d(x_s,y_B) ) \qquad \tn{for} \qquad \xi = 2\,d(x_s,y_B)\,.
	\end{align}
	We then find:
	\begin{proposition}
		\thlabel{thm:ShieldingPsiSphere}
		For a partition cell $\hcellY \in \hpartY_k$, $k=1,\ldots,K$ and two points $x_A$, $x_s \in X$ choose a coordinate system in $\R^3$ such that
		\begin{align*}
			x_A & = \begin{pmatrix} 0 \\ 0 \\ 1 \\ \end{pmatrix}, &
			x_s & = \begin{pmatrix} \sin \theta_s \\ 0 \\ \cos \theta_s \\ \end{pmatrix}, &
			\rep(\hcellY) & = \begin{pmatrix}
				\sin \theta_B \cdot \cos \varphi_B \\
				\sin \theta_B \cdot \sin \varphi_B \\
				\cos \theta_B \\ \end{pmatrix}
		\end{align*}
		for suitable $\theta_s$, $\theta_B \in [0,\pi]$, $\varphi_B \in (-\pi,\pi]$. Then a hierarchical lower bound for $\psi_{(x_A,x_s)}$ is given by
		\begin{align}
			\hat{\psi}_{(x_A,x_s)}(\hcellY) & = \xi \cdot \Delta d_{\tn{min}} \\
			\intertext{with}
			\Delta d_{\tn{min}} & = \theta_{B,\tn{min}} - \arccos(
				\sin \theta_s \cdot \sin \theta_{B,\tn{min}} \cdot \cos \varphi_{B,\tn{max}} +
				\cos \theta_s \cdot \cos \theta_{B,\tn{min}}
				) \\
			\varphi_{B,\tn{max}} & = \min\{ \pi, |\varphi_B| + \Delta \varphi \} \\
			\Delta \varphi & = \begin{cases} \arccos \sqrt{ \frac{ \cos^2 \rad(\hcellY) - \cos^ 2 \theta_B}{1-\cos^2 \theta_B}} & \tn{if } \cos^2 \rad(\hcellY) > \cos^2 \theta_B \\
			\pi & \tn{else}
				\end{cases} \\
			\theta_{B,\tn{min}} & = \max \{ 0, \theta_B - \rad(\hcellY) \} \\
			\xi & = 2\,d_{\ast} \\
			d_{\ast} & = \begin{cases}
				\max\{0,\theta_B - \rad(\hcellY) \} & \tn{if } \Delta d_{\tn{min}} > 0 \\
				\min\{ \pi, \theta_B + \rad(\hcellY) \} & \tn{else}
				\end{cases}
		\end{align}			
	\end{proposition}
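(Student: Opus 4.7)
The overall plan is to linearize $\psi_{(x_A,x_s)}$ via convexity of $t \mapsto t^2$ and then reduce the cell-wise minimization to an explicit two-parameter optimization in the adapted spherical coordinates. First I would invoke the sub-gradient inequality $a^2-b^2 \geq 2b(a-b)$ with $a = d(x_A,y)$, $b = d(x_s,y)$ to obtain
\begin{align*}
	\psi_{(x_A,x_s)}(y) \;\geq\; 2\,d(x_s,y)\,\big(d(x_A,y) - d(x_s,y)\big)
\end{align*}
for all $y\in\hcellY$. It then suffices to bound both factors uniformly over the cell in a way consistent with the sign of $\Delta d_{\tn{min}}$, since the product of a lower bound on each factor yields a valid lower bound on the product whenever both factors are non-negative.

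Next I would exploit the chosen coordinates: since $x_A$ is the north pole, $d(x_A,y)$ equals the polar angle $\theta_y$ of $y$, and the spherical law of cosines gives
\begin{align*}
	d(x_s,y) = \arccos\!\big(\sin\theta_s\sin\theta_y\cos\varphi_y + \cos\theta_s\cos\theta_y\big).
\end{align*}
For $y\in \hcellY$ the triangle inequality $|d(\rep(\hcellY),y)| \leq \rad(\hcellY)$ gives $\theta_y \in [\theta_{B,\tn{min}},\,\min\{\pi,\theta_B+\rad(\hcellY)\}]$. The bound on the azimuthal deviation $\Delta\varphi$ is obtained by applying the spherical law of cosines to the triangle $(x_A,\rep(\hcellY),y)$ at fixed $\theta_y$ and solving for the largest $|\varphi_y-\varphi_B|$ compatible with $d(\rep(\hcellY),y)\leq\rad(\hcellY)$; the worst case corresponds geometrically to the great meridian through $x_A$ being tangent to the geodesic disk around $\rep(\hcellY)$, which after simplification yields the stated closed form for $\Delta\varphi$.

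Then I would verify the monotonicity that makes the extremal evaluation valid. For fixed $\theta_y$, $d(x_s,y)$ is decreasing in $\cos\varphi_y$, hence maximized (and $\Delta d$ minimized) at $|\varphi_y|=\varphi_{B,\tn{max}}$. For fixed $\varphi_y$, a direct derivative computation in $\theta_y$ (or the observation that moving $y$ toward the pole $x_A$ simultaneously decreases $d(x_A,y)$ and increases $d(x_s,y)$) shows that $\Delta d$ is decreasing in $\theta_y$, so it is minimized at $\theta_y=\theta_{B,\tn{min}}$. Substituting these extremal angles into the formula for $d(x_s,y)$ gives the stated $\Delta d_{\tn{min}}$. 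Finally, I would pick $\xi = 2 d_{\ast}$ by cases: when $\Delta d_{\tn{min}}>0$ the product in the sub-gradient bound is minimized by choosing the smallest admissible value of $d(x_s,y)$ on the cell (which is controlled by the polar-angle extreme $\max\{0,\theta_B-\rad(\hcellY)\}$ via the pole-to-$y$ distance), while when $\Delta d_{\tn{min}}\leq 0$ we must take the largest admissible $d(x_s,y)$, i.e.\ the far extreme $\min\{\pi,\theta_B+\rad(\hcellY)\}$, so that the (now non-positive) product is still bounded from below.

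The main obstacle I anticipate is the geometric derivation of $\Delta\varphi$: rewriting the sphere-distance constraint as a quadratic inequality in $\cos\varphi_y$, handling the degenerate case $\cos^2\rad(\hcellY)\leq\cos^2\theta_B$ where the disk contains the pole and so every azimuth is admissible, and tracking the truncations at $0$ and $\pi$ introduced by the boundaries of the parameter domain. Once $\Delta\varphi$ is established and the monotonicity sign analysis is confirmed, the rest is a routine combination of the sub-gradient inequality with the triangle inequality on the sphere.
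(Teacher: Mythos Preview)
Your plan coincides with the paper's: the same sub-gradient linearization $a^2-b^2\geq 2b(a-b)$, the same relaxation of the metric ball to a spherical ``rectangle'' $\mc{D}_\theta\times\mc{D}_\varphi$ (the paper obtains $\Delta\varphi$ by computing the distance from $\rep(\hcellY)$ to a meridian, which is exactly your tangency picture), and the same minimization over that rectangle followed by a sign-dependent choice of $\xi$.

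One genuine slip: your monotonicity argument in $\theta_y$ is off. Moving $y$ toward the pole does \emph{not} always increase $d(x_s,y)$ (take $\varphi_y=0$, $\theta_y>\theta_s$: you move toward $x_s$). And even granting your intuition, it would make $\Delta d$ \emph{increasing} in $\theta_y$, not ``decreasing'' as you write; your conclusion ``minimized at $\theta_{B,\tn{min}}$'' is nonetheless correct. The clean argument, which the paper also only states without detail, is that meridians are unit-speed geodesics, so $\theta'\mapsto d(x_s,F(\theta',\varphi'))$ is $1$-Lipschitz and hence $\theta'\mapsto \theta'-d(x_s,F(\theta',\varphi'))$ is non-decreasing.

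A second point to tighten: your justification of $d_\ast$ is hand-wavy. You correctly identify that one needs a lower (resp.\ upper) bound on $d(x_s,y)$ over the cell, but then invoke ``the polar-angle extreme $\max\{0,\theta_B-\rad(\hcellY)\}$ via the pole-to-$y$ distance'', which bounds $d(x_A,y)$, not $d(x_s,y)$. The paper is equally terse here (``pick $\xi$ from the sub-differential at either the minimal or maximal end of possible distances''), so you are not held to a higher standard, but be aware that this step needs an actual inequality linking $d(x_s,y)$ to $\theta_B\pm\rad(\hcellY)$ and is not as immediate as the rest.
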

	As in the previous Section, this function is not particularly suitable for further analytic manipulation but it can readily be implemented numerically. A proof is given in Appendix \ref{apx:Proofs}.

\subsection{Noisy Cost Functions}
	\label{sec:ShieldingNoise}
	As discussed in Sect.~\ref{sec:IntroBackground} OT solvers based on the Monge-Amp\`ere equation require a particular form of the ground cost and even small perturbations can make a PDE based solver inapplicable. This can be particularly frustrating when the perturbation is only local and thus will most likely not affect the global structure of the optimal coupling.
	Such `noisy' costs arise for example in imaging, when geometric information is complemented by local image properties (e.g.\ \cite{SchmitzerSchnoerr-WassersteinModes2014}).
	
	Since the shielding condition is an inequality and because the internal solver in Algorithm \ref{alg:ShortCutSolver} can be combinatorial and thus does not rely on the cost function structure, the framework presented in this article can to some extent be adapted to the presence of noise.
	
	Throughout this section let
	\begin{align}
		c(x,y) & = \cGeo(x,y) + \eta\,\cNoise(x,y) + \lambda\,\cLip(x,y)
	\end{align}
	where $\cGeo$ is any of the \emph{geometric} cost functions discussed in Sections \ref{sec:ShieldingSqrEuclidean} to \ref{sec:ShieldingSphere}, $\cNoise : X \times Y \rightarrow [0, 1]$ is bounded but otherwise arbitrary and $\cLip$ is 1-Lipschitz in the first argument, $|\cLip(x_1,y)-\cLip(x_2,y)| \leq d(x_1,x_2)$ for the appropriate metric $d$. Random local noise is modelled by $\cNoise$, $\cLip$ describes other cost contributions that may have long-range structure. The positive constants $\eta$ and $\lambda$ determine the relative strength of the components.
	
	The shielding condition for this cost is
	\begin{align}
		[c(x_A,y_B)-c(x_s,y_B)] - [c(x_A,y_s)-c(x_s,y_s)] & > 0 \\
		\Leftrightarrow \qquad \Delta \cGeo + \eta \cdot \Delta \cNoise + \lambda \cdot \Delta \cLip & >  0
	\end{align}	
	with
	\begin{align}
		\Delta c_{\chi} = [c_{\chi}(x_A,y_B)-c_{\chi}(x_s,y_B)] - [c_{\chi}(x_A,y_s)-c_{\chi}(x_s,y_s)]
	\end{align}
	for $\chi = \tn{geo}, \tn{n}, \tn{L}$. Using the assumptions on $\cNoise$ and $\cLip$ we find
	\begin{align}
		 |\Delta \cNoise| & \leq 2, & |\Delta \cLip| & \leq 2\,d(x_A,x_s)\,.
	\end{align}
	So a sufficient condition for shielding is
	\begin{align}
		\Delta \cGeo & > 2\,\eta + 2\,\lambda\,d(x_A,x_s)
	\end{align}
	which is the original shielding condition of $\cGeo$ with an additional but constant offset (if, as discussed above, we can bound $d(x_A,x_s)$).
	In the case of the squared Euclidean distance this means that the shielding hyperplanes are shifted outwards by an additional margin of $\eta + \lambda \cdot |x_A-x_s|$. \thref{thm:ShieldingYHatBound} can then be adapted appropriately and one can still bound the size of $\hat{Y}$. Moreover, it is straight-forward to add this margin to the hierarchical bound $\hat{\psi}_{(x_A,x_s)}$, \thref{thm:ShieldingPsiEuclidean} and the Cartesian case, Remark \ref{rem:ShieldingCartesianGrids}.
	
	Recalling the informal discussions in Sections \ref{sec:ShieldingConvex} and \ref{sec:ShieldingSphere} this margin can in principle also be added to the bounds $\hat{\psi}_{(x_A,x_s)}$ for those costs, but for the remainder of this article we will only consider noisy variants of the squared Euclidean distance.


\section{Numerical Experiments}
	\label{sec:Numerics}
	We now present a series of numerical experiments to demonstrate %
	compatibility of the presented algorithm with current professional discrete solver software, %
	efficiency of the multi-scale scheme %
	and applicability of the scheme to practical problems.
	The code used for the numerical experiments is available from the author's website.\footnote{\url{https://www.ceremade.dauphine.fr/~schmitzer/}}
	
	\subsection{Implementation Details}
		We use three different algorithms as internal solvers $\funcSolve$ in Algorithm \ref{alg:ShortCutSolver}: %
		the network simplex \cite{NetworkFlows1993} implementation of CPLEX \cite{CPLEX}, and the network simplex and cost scaling \cite{GoldbergCostScaling1990} implementations of the LEMON library \cite{LEMON}. In the following we refer to these algorithms by the short-hands \texttt{CPLEX}, \texttt{LEMON-NS} and \texttt{LEMON-CS}.
		
		We measure and compare run-time, number of problem variables and other characteristics of solving test problems in different ways:
		\begin{itemize}
			\item \texttt{dense}: naive direct solving of the full dense problem,
		\end{itemize}
		and with Algorithms \ref{alg:ShortCutSolver} and \ref{alg:MultiScale} combined with the various methods for $\funcShield$ to construct shielding neighbourhoods developed in Sect.~\ref{sec:Shielding}:
		\begin{itemize}
			\item \texttt{grid}: directly using the grid structure, Remark \ref{rem:ShieldingCartesianGrids},
			\item \texttt{tree-2}: hierarchical search for squared Euclidean distance, \thref{thm:ShieldingPsiEuclidean},
			\item \texttt{tree-p}: hierarchical search for $p$-th power of Euclidean distance, \thref{thm:ShieldingPsiPEucl},
			\item \texttt{tree-sphere}: hierarchical search for geodesic distance on sphere, \thref{thm:ShieldingPsiSphere},
			\item \texttt{tree-noise}: hierarchical search for noisy squared Euclidean distance with noise slack, Sect.~\ref{sec:ShieldingNoise}.
		\end{itemize}
		
		\begin{remark}[Adaptive Re-Initialization]
			\label{rem:ReInitialization}
			In principle we can use the algorithms mentioned above as mere black boxes, solving each problem $\pi_{k+1} \leftarrow \funcSolve(\neigh_k)$ in Algorithm \ref{alg:ShortCutSolver} from scratch.
			
			However, when $\pi_k$ is close to optimality we expect $\neigh_k$ to only change slightly between successive problems. Hence, we try to preserve solver information between iterations to accelerate subsequent solving of similar problems.
			For the CPLEX network optimizer this can be achieved by using the functions \texttt{CPXNETgetbase()} and \texttt{CPXNETcopybase()} which extract and set a network basis for a given problem.
			
			The user interface of the LEMON library does not provide similar functions. Instead we try a simple trick: let $(\alpha_k,\beta_k)$ be the local dual optimizers w.r.t.\ $\neigh_{k-1}$ (which are provided by the LEMON Algorithms). Then we create a dual feasible pair $(\alpha_{k+1,\tn{init}},\beta_{k+1,\tn{init}})$ by first setting $(\alpha_{k+1,\tn{init}},\beta_{k+1,\tn{init}}) \leftarrow (\alpha_k,\beta_k)$ and then reducing $\alpha_{k+1,\tn{init}}(x)$ appropriately whenever a dual constraint for some $(x,y) \in \neigh_k$ is found to be violated. Then we call the algorithm with the modified cost $\hat{c}(x,y) = c(x,y) - \alpha_{k+1,\tn{init}}(x) - \beta_{k+1,\tn{init}}(y)$. $\hat{c}$ has the same primal optimizers as $c$ and the dual optimizers are related by adding / subtracting the initial dual variables.
			
			We refer to the variants with and without adaptive re-initialization by \texttt{init} and \texttt{noinit}.
		\end{remark}

		\begin{remark}[Truncation]
			\label{rem:Truncation}
			Note that the algorithms \texttt{LEMON-NS} and \texttt{LEMON-CS} internally only operate on integer data. So for application of these algorithms all problem data had to be truncated to a discrete grid of values. We set the resolution of this grid to $10^{-9}$ (i.e.\ a measure of unit mass was approximated by $10^9$ mass atoms) which implies a high practical resolution.
		\end{remark}

	\paragraph{Test Data.}
	Most experiments were performed on regular grids in $\R^2$ with full support to ensure existence of the full 4-neighbourhood which was chosen for the shielding candidates $\shield(\cdot)$, see also Remark \ref{rem:ShieldingCartesianGrids}.
	Assuming full support is also common for continuous solvers (e.g.~\cite{OptimalTransportWarping,ObermanMongeAmpere2014}). This can be ensured by adding a small constant measure. We observed that the solvers could handle very small constants and that thus the distortion of the problems was negligible.
	We considered grid sizes between 50 $\times$ 50 and 512 $\times$ 512, so the cardinalities of $X$ and $Y$ ranged between $2.5 \cdot 10^3$ and $2.6 \cdot 10^5$ and the dimensions of the full coupling spaces between $6.3 \cdot 10^6$ and $6.9 \cdot 10^{10}$. We considered transports between grids of equal size, i.e.\ $|X|=|Y|$.
	The tested measures were generated by adding random Gaussians with eigenvalues of the covariance matrix ranging between $1.8$ and $\approx 100$. Some densities were then multiplied with discontinuous masks. So the test problems contain both smooth parts as well as strong Dirac-like local concentrations of mass and sharp discontinuities. We consider these to be challenging problems (see Fig.~\ref{fig:ExpJacobian}), representing a wide range of practical applications.
	All reported numbers are averaged over a collection of test problems (10 to 50 problems, depending on size).
	
	\subsection{Comparing Different Internal Solvers}
		\label{sec:NumericsRuntimes}
		\begin{figure}[hbt]%
			\centering%
			\includegraphics{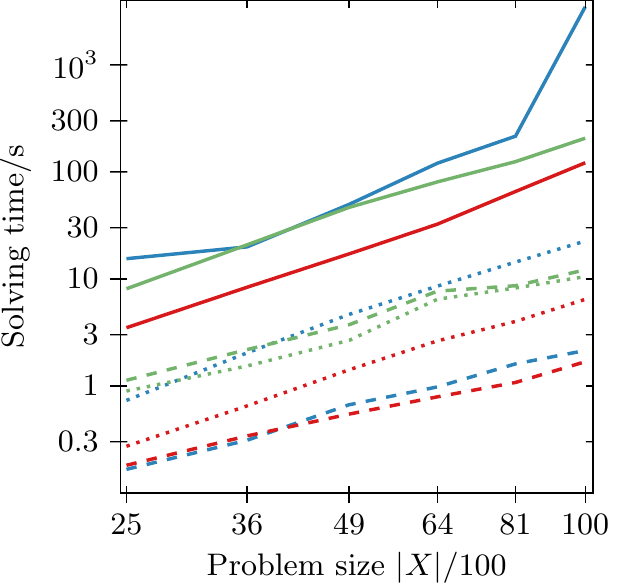}%
			\hfill %
			\includegraphics{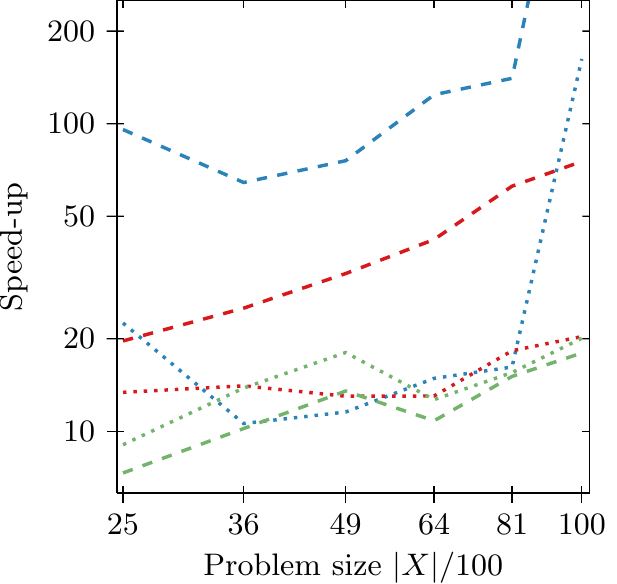}%
			\hfill %
			\includegraphics{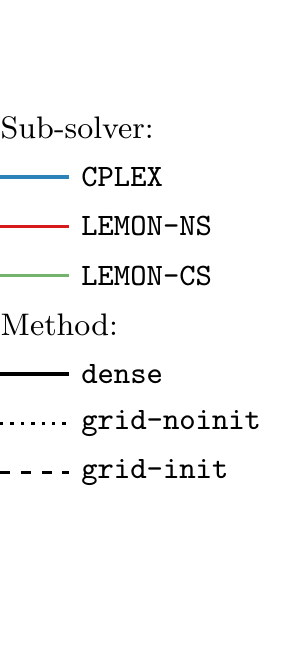}%
			\caption{%
				\textbf{Left:} Overall run-times of \texttt{dense} and sparse methods (\texttt{grid-init} and \texttt{grid-noinit}) for different internal sub-solvers. \textbf{Right:} Speed-up ratio of sparse solvers w.r.t.\ dense solver. \\
				Clearly the sparse multi-scale methods outperform the naive dense solvers. For \texttt{LEMON-CS} \texttt{init} does not accelerate solving relative to \texttt{noinit}. For \texttt{LEMON-NS} and in particular for \texttt{CPLEX} \texttt{init} leads to significantly further reduced run-times, resulting in speed-ups of around two orders of magnitude and more.}
			\label{fig:ExpRuntimesBasic}
		\end{figure}
		We compare the run-times of the naive dense algorithms (\texttt{dense}) with using them as internal solvers $\funcSolve(\cdot)$ in Algorithm \ref{alg:ShortCutSolver} combined with \ref{alg:MultiScale}. For simplicity, for $\funcShield(\cdot)$ we choose the grid-based method (\texttt{grid}). For the multi-scale timing we sum the time it takes to solve all levels, from coarse to fine. The observed run-times and speed-ups (with (\texttt{init}) and without (\texttt{noinit}) adaptive re-initialization, Remark \ref{rem:ReInitialization}) are illustrated in Fig.~\ref{fig:ExpRuntimesBasic}. All reported run-times were obtained on a single core of an Intel Xeon E5-2697 processor at 2.7 GHz.
		
		The solving times per iteration in Algorithm \ref{alg:ShortCutSolver} are plotted in Fig.~\ref{fig:ExpBasicStepsolving}. For \texttt{noinit} the observed acceleration roughly ranges between 10 and 20 (as reported in \cite{Schmitzer-SSVM2015}). For \texttt{init} the trick described in Remark \ref{rem:ReInitialization} for the \texttt{LEMON-*} algorithms does not seem to have any significant effect on the run-time of \texttt{LEMON-CS} whereas with its help \texttt{LEMON-NS} can be accelerated by up to almost two orders of magnitude. For \texttt{CPLEX} the dedicated re-initialization method allows a speed-up of well over $10^2$. As the dense \texttt{CPLEX} solver is particularly slow for problem sizes of $10^4$, the average time ratio even exceeds $10^3$ for this problem size (\texttt{CPLEX} internally operates on \texttt{double} data, so a potential explanation for this divergence may be given by Remark \ref{rem:Truncation}). In particular for the re-initialized variants of \texttt{CPLEX} and \texttt{LEMON-NS} the reported speed-up consistently increases with problem size.
		
		For simplicity and due to the availability of a dedicated re-initialization method, we focus on \texttt{CPLEX} in the following experiments, keeping in mind that similar solving times may be obtained with \texttt{LEMON-NS} while \texttt{LEMON-CS} performs less well.
		
		\begin{figure}[hbt]
			\centering%
			\subfloat[]{%
			\label{fig:ExpBasicStepsolving}%
			\includegraphics{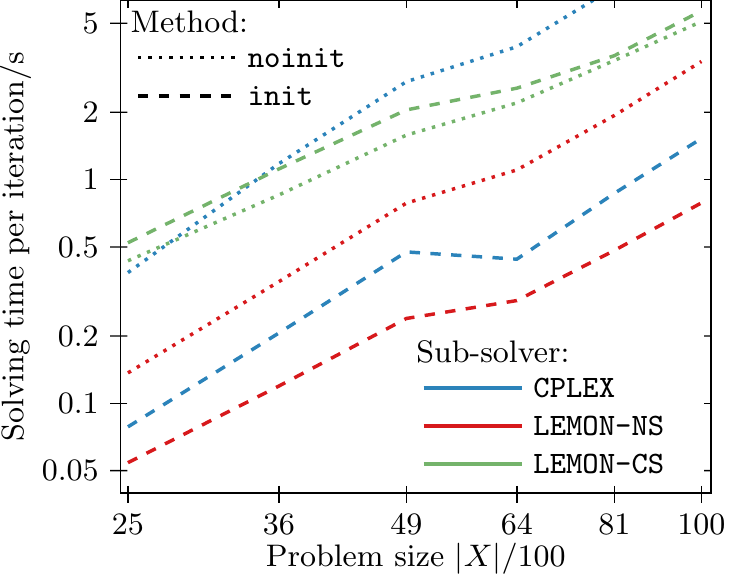}%
			}%
			\hfill %
			\subfloat[]{%
			\label{fig:ExpBasicSparsity}%
			\includegraphics{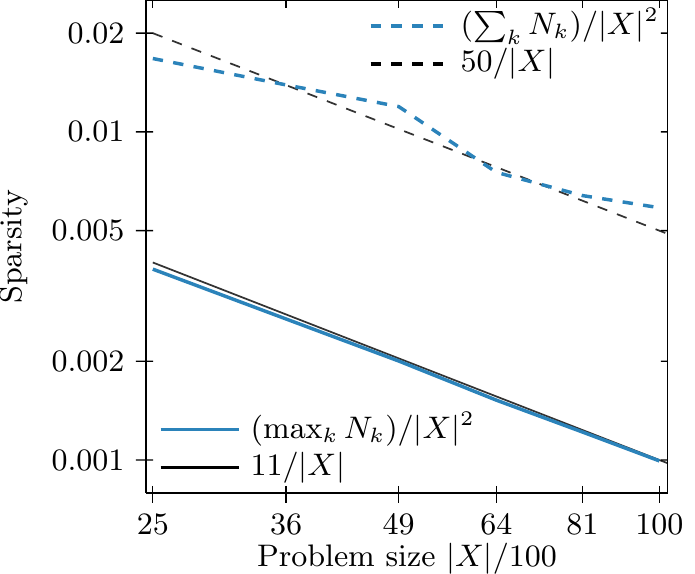}%
			}%
			\caption{%
				\textbf{Left:} %
				Solving time for a single iteration in Algorithm {\protect \ref{alg:ShortCutSolver}} at the finest scale level. As discussed in Fig.~{\protect \ref{fig:ExpRuntimesBasic}}, \texttt{init} is efficient for \texttt{LEMON-NS} and \texttt{CPLEX}. %
				\textbf{Right:} Sparsity of shielding neighbourhoods for \texttt{CPLEX} (plots for \texttt{LEMON-*} look very similar). Maximum ($\max_k |N_k| / |X \times Y|$) and cumulative ($\sum_k |N_k| / |X \times Y|$) number of variables, divided by total number of variables (recall we chose $|X|=|Y|$) in the shielding neighbourhoods of the iterations in Algorithm {\protect \ref{alg:ShortCutSolver}} at finest scale. %
				The fraction of relevant variables reduces with increasing problem size. %
				The black lines give $\mc{O}(1/|X|)$ for comparison, indicating that the number of variables in $N_k$ \emph{per $x \in X$} is roughly constant. %
				This illustrates that the shielding mechanism works as sketched in {\protect \thref{thm:ShieldingYHatBound}}. %
				}
		\end{figure}
	\subsection{Sparsity and Number of Iterations}
		\label{sec:NumericsSparsity}
		\begin{figure}[hbt]
			\centering
			\includegraphics{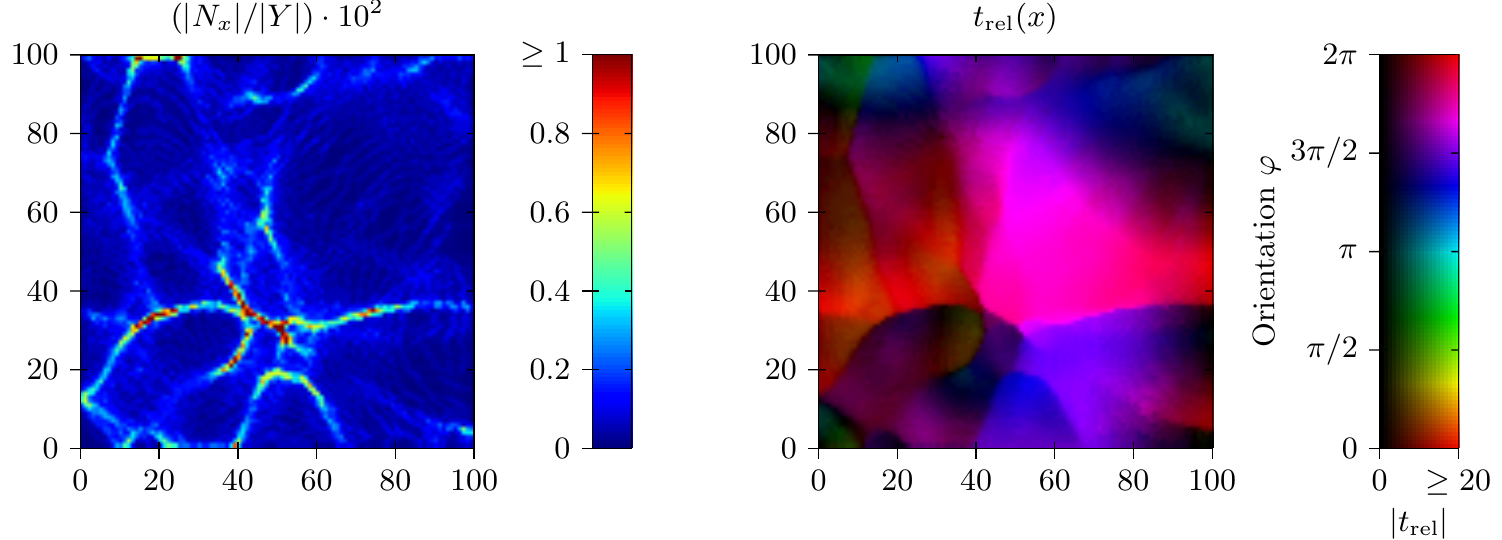}%
			\caption{Analysis of shielding neighbourhood for a test problem with $|X|=|Y|=100^2$. \textbf{Left:} For $x \in X$ let $N_x = N_k \cap (\{x\} \times Y)$, the $x$-row of the shielding neighbourhood $N_k$. Heat-map of $|N_x|/|Y|$ over $X$ after one iteration of Algorithm {\protect \ref{alg:ShortCutSolver}} ($k=2$). Color range from $0$ to $1\%$. %
			\textbf{Right:} Average relative displacement map $t_{\tn{rel}}(x)=\sum_y (y-x)\,\pi_k(x,y)/\mu(x)$ in polar coordinates over $X$. \\
			The map $t_{\tn{rel}}$ contains long displacements ($>20$ pixels), sharp discontinuities and strong compression / expansion (the Jacobian determinant of $t_{\tn{rel}}$ ranges from $10^{-2}$ to $10^2$), indicating a challenging test problem.
			We see that the sparsity adapts locally to the spatial regularity of the assignment. In regions where $t_{\tn{rel}}$ is regular only few neighbours per element are necessary. %
			$|N_x|$ rises at the discontinuities of $t_{\tn{rel}}$, but this effect is local.
			Note that there is no rise in $|N_x|$ at the boundaries of the image (cf.~Remark {\protect \ref{rem:ShieldingCartesianGrids}}).}%
			\label{fig:ExpJacobian}%
		\end{figure}
		The demonstrated speed-up relies on the sparsity of $\neigh_k$ which also reduces the memory requirement of the algorithm. Our numerical findings are presented in Fig.~\ref{fig:ExpBasicSparsity}. The ratio $|\neigh_k|/|X \times Y|$ is consistently decreasing with increasing problem size and we observe $|\neigh_k| = \mc{O}(|X|)$ as discussed in Sect.~\ref{sec:Shielding} (see in particular \thref{thm:ShieldingYHatBound}). The assumption that $\pi_k$ is spatially regular therefore seems appropriate.
		Even the sum $\sum_k |\neigh_k|$ over all iterations of Algorithm \ref{alg:ShortCutSolver} does scale as $\mc{O}(|X|)$. On the test data the median number of iterations was $5$, the $95\%$ quantile was $8$ iterations, thus numerically confirming the efficiency of the multi-scale scheme for obtaining good initializations (cf.~Sect.~\ref{sec:AlgorithmHierarchy}).
		
		Figure \ref{fig:ExpJacobian} gives an impression of the structure of $\neigh_k$ during execution of Algorithm \ref{alg:ShortCutSolver}. We see that $\neigh_k$ locally adapts to the regularity of the assignment: in regular areas only few elements in $\neigh_k$ are needed per element $x \in X$. In irregular regions the size of the neighbourhood increases, but this is only a local effect. The regular regions are not affected by this. This indicates that the global regularity assumption on $\pi$ in \thref{thm:ShieldingYHatBound} could be relaxed to a local variant in a potential complexity analysis of the method.
		
	\subsection{Larger Problems}
		\begin{figure}[hbt]%
		\centering%
		\includegraphics{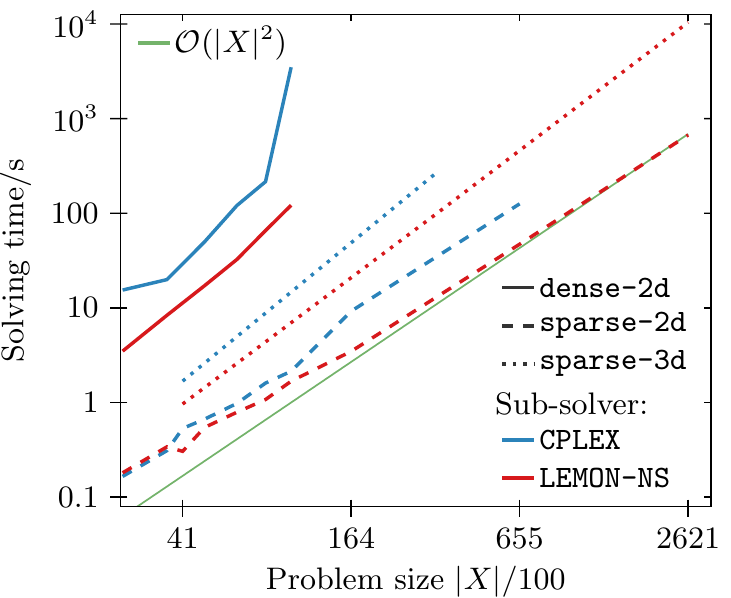}%
		\hfill %
		\includegraphics{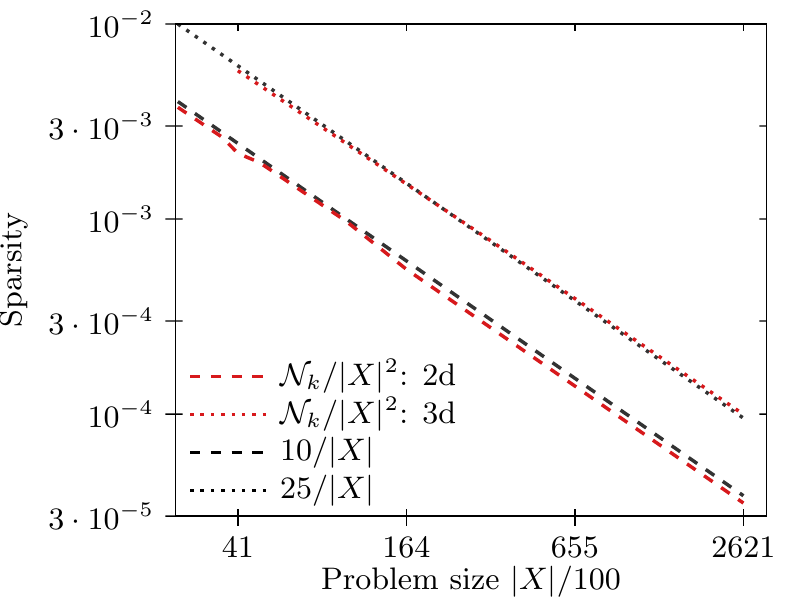}%
		\caption{%
			\textbf{Left:} Run-times of the sparse solver (\texttt{grid-init}) for 2d-data up to image size  $512 \times 512$ ($|X|=|Y| \approx 2.6 \cdot 10^5$) and 3d-data up to cube size $64 \times 64 \times 64$. The dense 2d run-times (from Fig.\ \ref{fig:ExpRuntimesBasic}) are given for comparison (the dense 3d run-times can be assumed to be similar). %
			The green line marks run-time scaling of order $\mc{O}(|X|^2)$, indicating that the performance of the sparse solver is super-linear (approximately quadratic) in the marginal size. Therefore, also the sparse solver eventually becomes impractically slow. %
			\textbf{Left:} Sparsity of the shielding neighbourhoods for \texttt{LemonNS}, analogous to Fig.~\ref{fig:ExpBasicSparsity}. On 3d-data a similar behaviour is observed, with a higher number of variables per $x \in X$, owing to the higher dimension of the base space.}
		\label{fig:ExpLarge}
	\end{figure}

	The maximal problem size in the experiments presented in Sect.~\ref{sec:NumericsRuntimes} is determined by the limitations of the dense solvers: above $|X|=|Y|=100 \times 100$ memory demand soon exceeds the capabilities of standard workstations and run-times become too large even for testing purposes.
	In this section we study the limits of the sparse solvers.
	We test two-dimensional data on grids up to size 512 $\times$ 512 and three-dimensional cubes up to size $64 \times 64 \times 64$, using the squared Euclidean distance as cost. The results are summarized in Fig.\ \ref{fig:ExpLarge}.
	
	The observed sparsity for two dimensions is consistent with Fig.~\ref{fig:ExpBasicSparsity}. As expected from Sect.~\ref{sec:ShieldingSqrEuclidean}, in three dimensions the scaling is similar but with more coupling variables per $x \in X$.
	With the presented algorithm 2d-problems up to about 256 $\times$ 256 can be solved conveniently. For three dimensions the performance is weaker, due to the increased number of sparse variables.

	While memory is no longer a practical limitation, the run-time scales super-linearly with the marginal size $|X|$, thus eventually also becoming impractical.	
	A direction of further research might be the consistent splitting of one large problem into smaller parts, avoiding the super-linear scaling and using multiple CPUs in parallel.

	\subsection{Comparing Different Shielding Construction Methods}
		\begin{figure}[hbt]%
			\centering%
			\includegraphics{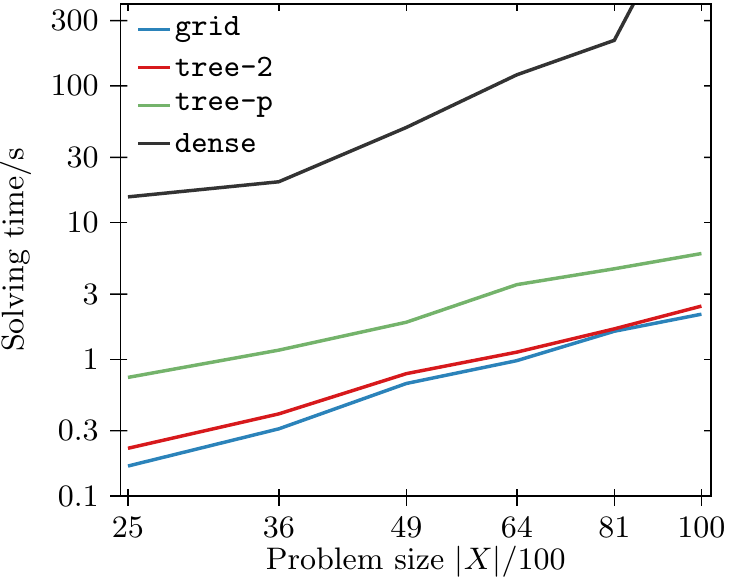}%
			\hfill %
			\includegraphics{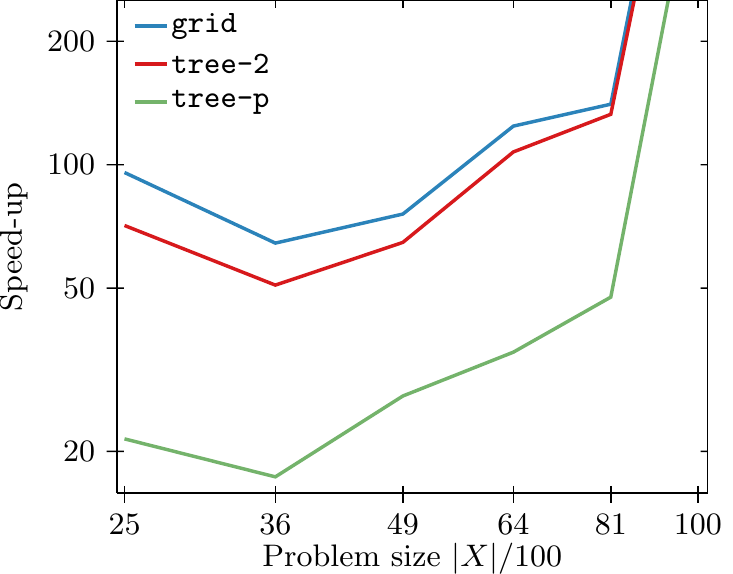}%
			\caption{\textbf{Left:} Comparing run-time of dense approach and sparse algorithm for different shielding methods. %
			\textbf{Right:} Implied speed-up relative to the dense solver. \\%
			As expected, \texttt{grid} is the most efficient variant, however \texttt{tree-2} takes only slightly more time. The less precise bound used in \texttt{tree-p} is somewhat slower but still achieves a speed-up of about 50 for $|X|=|Y|=8100$, consistently increasing with problem size.}%
			\label{fig:ExpShieldingTimes}%
		\end{figure}
		For the squared Euclidean distance on regular grids we have presented three methods to construct shielding neighbourhoods: the direct method via exploiting the grid structure (\texttt{grid}, Remark \ref{rem:ShieldingCartesianGrids}), the hierarchical bound \texttt{tree-2} (\thref{thm:ShieldingPsiEuclidean}) and as a special case of the hierarchical bound \texttt{tree-p} (\thref{thm:ShieldingPsiPEucl}). We will now compare the efficiency of these methods. Note that all three will eventually produce the same neighbourhoods. So the solving part of Algorithm \ref{alg:ShortCutSolver} is not affected.
		The total run-times and resulting speed-ups are displayed in Fig.~\ref{fig:ExpShieldingTimes}. As expected, \texttt{grid} performs best, \texttt{tree-2} is only a little slower. The bound \texttt{tree-p}, containing more approximations, is less tight and therefore less efficient but still provides a significant speed-up over the dense method.
		
		Additional insight can be gained from Fig.~\ref{fig:ExpShieldingTShielding} which displays the average time to construct a shielding neighbourhood and Fig.~\ref{fig:ExpShieldingNShielding} which displays the number of evaluations of the hierarchical bound $\hat{\psi}_{(\cdot,\cdot)}$, \eqref{eq:ShieldingConditionPsiHat}, during the hierarchical search of $\hat{Y}$ by Algorithm \ref{alg:FindYHat}, over the whole construction of a shielding neighbourhood (i.e.\ not just for one $x \in X$).
		It roughly scales as $\mc{O}(|X|)$, i.e.\ the hierarchical search is in fact more efficient than a naive dense iteration over $Y$.
		\begin{figure}[hbt]
			\centering%
			\subfloat[]{%
			\includegraphics{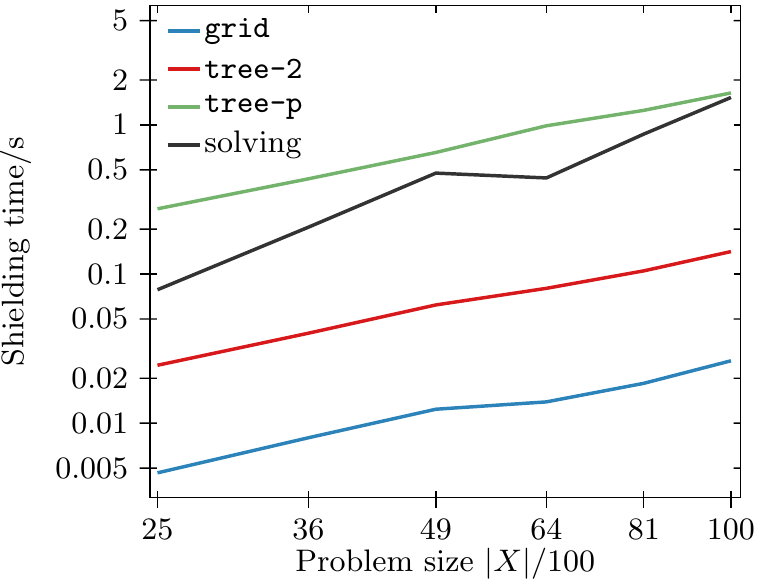}%
			\label{fig:ExpShieldingTShielding}%
			}%
			\hfill
			\subfloat[]{%
			\includegraphics{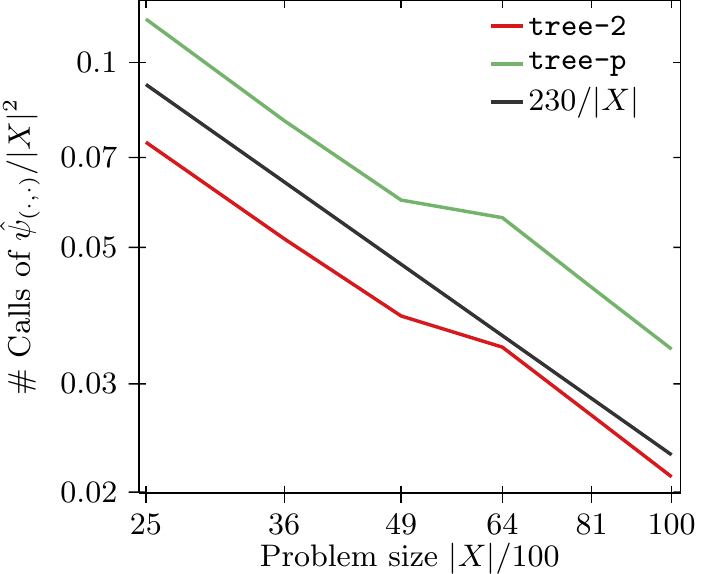}%
			\label{fig:ExpShieldingNShielding}%
			}%
			\caption{%
				\textbf{Left:} Time for construction of shielding neighbourhood for \texttt{grid}, \texttt{tree-2} and \texttt{tree-p}. Time for subsequent sparse solving is given for comparison. %
			\textbf{Right:} Number of evaluations of the bound $\hat{\psi}_{(\cdot,\cdot)}$ during construction of a shielding neighbourhood, divided by $|X \times Y|$. The relative number of these calls decreases with problem size. The black line represents a scaling like $\mc{O}(|X|)$. This means that the number of calls per element $x \in X$ is roughly constant and thus the hierarchical search is more efficient than the naive full iteration over all pairs. The kink between $4900$ and $6400$ is due to a new layer being added in the hierarchical partition.}
			\label{fig:ExpShielding}
		\end{figure}

	\subsection[\textbar x-y\textbar \textasciicircum p for various p]{$|x-y|^p$ for various $p$.}
		\label{sec:NumericsPEucl}
		Let us now compare \texttt{tree-p} for different exponents $p$. In Fig.~\ref{fig:ExpPEucl} results for various values of $p$ in $[1.01,2.5]$ are displayed. As expected, the dense solving time does not depend significantly on the exponent.
		
		For $p \searrow 1$ the sparse solver becomes increasingly slower, in the extreme case $p=1.01$ even exceeding the run-time of the dense solver. This can be understood by looking at Fig.~\ref{fig:ShieldingPEuclIllustration}. As $p \searrow 1$, the area covered by one shielding candidate decreases, leading to an increase in the calls to the hierarchical bound $\hat{\psi}_{(\cdot,\cdot)}$ as well as larger shielding neighbourhoods $\mc{N}_k$.
		As $p$ moves away from $1$, the number of calls and variables quickly becomes almost constant, leading to similar run-times.
		
		An exception is the case $p=2$. A detailed study of the underlying code revealed that the difference in run-time stems largely from the \texttt{c++} function \texttt{pow()}, which is only required for $p \neq 2$ (cf.~\eqref{eq:ShieldingConvexPEuclSummary}), thus the case $p=2$ takes somewhat less time, which accumulates over the number of all queries.
		This sensitivity to small performance differences in the code highlights the potential of further research on bounds like \thref{thm:ShieldingPsiPEucl} to obtain simple and yet sufficiently accurate expressions, as well as computationally streamlined implementation thereof.
		\begin{figure}[hbt]
			\centering%
			\subfloat[]{%
			\includegraphics{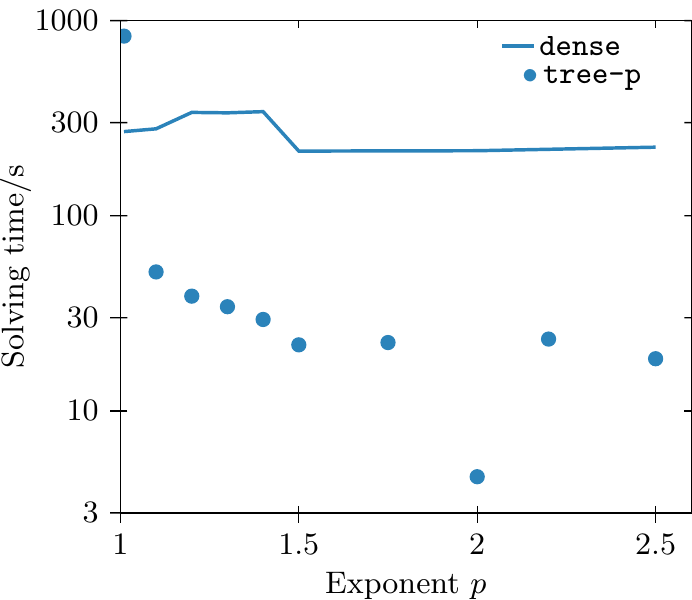}%
			}%
			\hfill %
			\subfloat[]{%
			\includegraphics{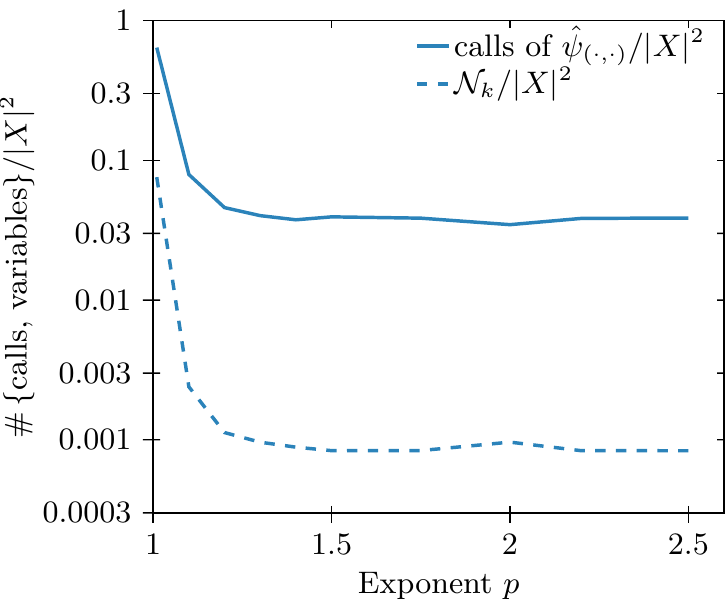}%
			\label{fig:ExpPEuclQueries}
			}%
			\caption{\textbf{Left:} Solving times for cost $c(x,y) = |x-y|^p$ for different $p$ for \texttt{dense}, and \texttt{tree-p} for $p \in [1.01,2.5]$. %
			\textbf{Right:} Number of evaluations of the bound $\hat{\psi}_{(\cdot,\cdot)}$ during construction of a shielding neighbourhood and cardinality of $\neigh$, divided by $|X| \times |Y| = |X|^2$. See also Fig.~{\protect \ref{fig:ExpShielding}}.\\ %
			For $p \searrow 1$ the sparse method becomes slower, caused by an increasing number of bound evaluations and neighbourhood variables. For $p \gg 1$ the situation stabilizes, resulting in almost constant number of calls, variables and run-time.
			However, for $p=2$ the computation time of such a query is slightly lower (see text), thus resulting in an overall lower run-time.}
			\label{fig:ExpPEucl}%
		\end{figure}

	\subsection{Noisy Costs}
		An important aspect of the presented discrete framework is the robustness towards noisy costs (Sect.~\ref{sec:ShieldingNoise}). Here we consider the following set-up: $\cGeo = |\cdot-\cdot|^2$, $\cNoise$ is randomly sampled from $[0,1]$ (in the hierarchical cost functions $c_k$, $k>0$ this contribution can be ignored). For the Lipschitz part we chose:
		\begin{align*}
			\cLip(x,y) & = \frac{k_{\tn{mag}}}{2\pi} \cdot \sin\left(\frac{2\pi}{k_{\tn{mag}}} \cdot \la k(y), x\ra \right) \\
			\intertext{with}
			k(y) & = (\cos \varphi(y), \sin \varphi(y))^\T, \qquad \varphi(y) = \la (1,1)^\T, y \ra\,,
			\qquad k_{\tn{mag}}=20\,.
		\end{align*}
		So $\cLip(x,y)$ is a sine in $x$ for each fixed $y$, its orientation given by $\varphi$ which is simply chosen to provide some `random' angles for each $y \in \R^2$.
		
		The corresponding weights $\eta$ and $\lambda$ where set to all combinations in $(\eta,\lambda) \in \{0,5,10,15\}^2$. For $|x_A-x_s|=1$, as on the finest level of our Cartesian grid, this corresponds to additional slacks ranging from $0$ to $30$. This means that, compared to the clean squared Euclidean distance, a particular assignment could be distorted by as much as 30 pixels, which is a considerable fraction of the underlying image size.
		\begin{figure}[hbt]
			\centering %
			\includegraphics{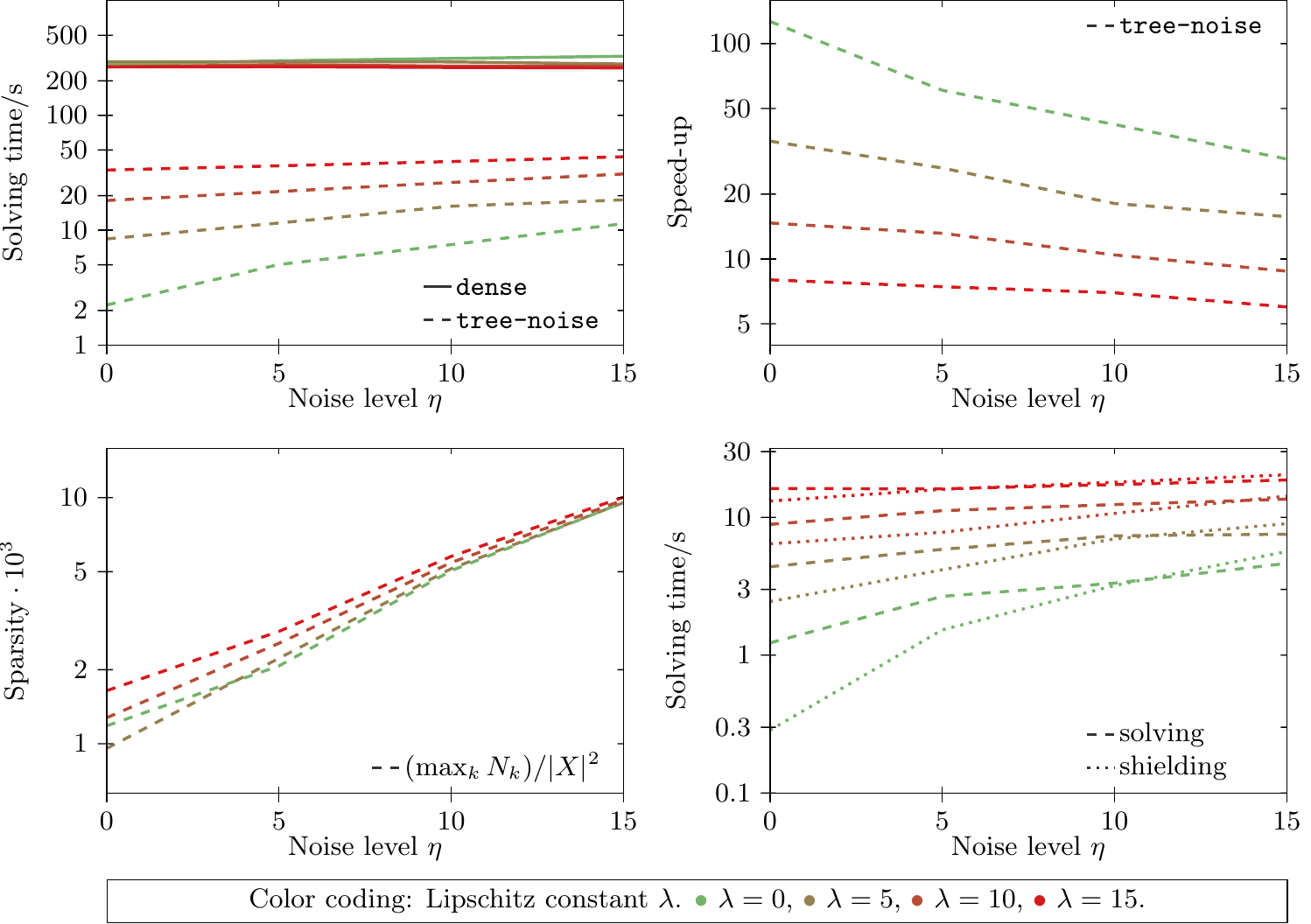} %
			\caption{Noisy costs on $|X|=|Y|=90^2$. \textbf{Top left:} Total solving time on noisy costs for dense and sparse algorithms. As expected, the sparse solving time increases with both types of noise (random and Lipschitz), while the dense solving time is hardly affected.
			\textbf{Top right:} Implied speed-up relative to dense solver. It decreases with increasing noise levels. But the method remains applicable and effective: even for $\eta + \lambda \cdot |x_A-x_s|=30$ (which implies that local assignments can be distorted by as much as 30 pixels relative to the squared Euclidean distance) the speed-up is almost one order of magnitude. %
			\textbf{Bottom left:} Sparsity of shielding neighbourhoods $\neigh_k$. The Lipschitz distortion has less impact on the variable count than the random noise. (Not shown: the scaling of $|\neigh_k|$ for different problem sizes is still $\mc{O}(|X|)$.) %
			\textbf{Bottom right:} Time required for solving and constructing shielding neighbourhoods on finest scale level (sum over all iterations). With increasing noise levels the additional slack in the bound $\hat{\psi}_{(\cdot,\cdot)}$ (cf.\ Sect.\ {\protect \ref{sec:ShieldingNoise}}) requires more queries and thus increases the shielding time.}
			\label{fig:ExpNoise}
		\end{figure}
		The corresponding numerical findings for $|X|=|Y|=90^2$ are presented in Fig.~\ref{fig:ExpNoise}. The overall solving-time increases with $\eta$ and $\lambda$, but even for the noisiest case that we considered, there remained a speed-up of almost one order of magnitude. The corresponding neighbourhood sizes increase with $\eta$. The value of $\lambda$ seems to have little impact. The neighbourhood sizes still scale as $\mc{O}(|X|)$.
		
		It is no surprise that the sparse method becomes less efficient with increasing noise. However, this decrease comes gradually. Unlike continuous solvers, there is no immediate breakdown when noise is added. The method remains applicable and useful.
		
	\subsection{Sphere}
		\begin{figure}[hbt]
			\centering
			\includegraphics{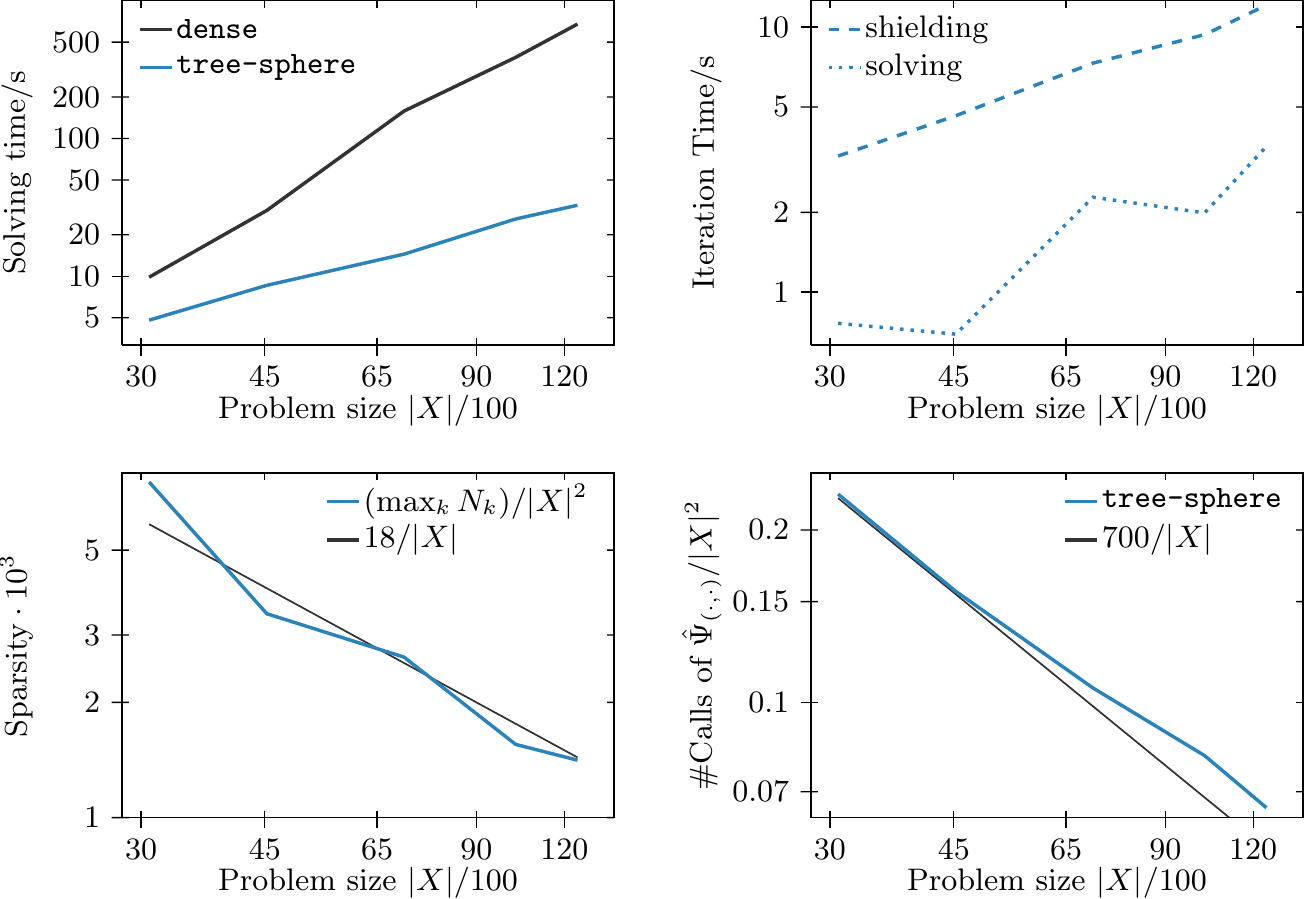}
			\caption{Sparse solving on the sphere. \textbf{Top left:} Average absolute solving time of dense and sparse solver. While the speed-up is not as strong as on $\R^n$, it still increases consistently with problem size and is well above one order of magnitude for the largest problem considered. %
			\textbf{Top right:} Time for construction of shielding neighbourhood and subsequent sparse solving. %
			\textbf{Bottom left:} Sparsity of shielding neighbourhood. Black line gives scaling of $\mc{O}(|X|)$. %
			\textbf{Bottom right:} Number of evaluations of the bound $\hat{\psi}_{(\cdot,\cdot)}$ during construction of a shielding neighbourhood, divided by $|X \times Y|$. See also Fig.~{\protect \ref{fig:ExpShielding}}. \\%
			In this setting the shielding time exceeds the solving time (see text for explanation). However, the low solving time and the plots on the bottom row show that the basic concept of shielding neighbourhoods is useful in this more general setting.}
			\label{fig:ExpSphere}
		\end{figure}
		We sample points from the sphere $\sphere$ and create test densities similar to $\R^n$ but instead of Gaussians we add radial `pseudo-Gaussians' of the form $f(x) = A \cdot \exp(-d(x_0,x)^2/(2\sigma))$.
		
		Numerical results using the hierarchical bound \texttt{tree-sphere}, \thref{thm:ShieldingPsiSphere}, are summarized in Fig.~\ref{fig:ExpSphere}. Again, we observe a speed-up, increasing with problem size, well exceeding one order of magnitude for the largest test problems.
		In this setting the time to construct shielding neighbourhoods exceeds the time required to solve the sub-problems. This is mostly due to the use of several trigonometric functions in \thref{thm:ShieldingPsiSphere}. As we have learned in the $|x-y|^p$-experiments, even little computational effort per call adds up over the whole execution of the algorithm.
		Note however, that the sparsity ratios are comparable to that in $\R^n$ (Fig.~\ref{fig:ExpBasicSparsity}) and that also the relative number of calls of $\hat{\psi}_{(\cdot,\cdot)}$ is decreasing, indicating the usefulness of the concept of shielding neighbourhoods even in this more general setting.
		
		From this follows that further significant reduction of the run-time may be obtained by computationally streamlining the hierarchical bounds, without needing to change the solver.


\section{Discussion and Conclusion}
\label{sec:Conclusion}

\paragraph{Symmetry.}
The concept of short-cuts and the shielding condition are symmetric in $X$ and $Y$. Symmetry is broken in \thref{thm:ShieldingImpliesShortcut}. A more general, symmetric version of \thref{thm:ShieldingImpliesShortcut} can be established, providing a larger class of short-cuts.
Consequently one could weaken the assumptions on shielding neighbourhoods in Def.~\ref{def:ShieldingNeighbourhood}.
The advantage of the non-symmetric version chosen in this article becomes apparent in Algorithm \ref{alg:ConstructShielding} where the main loop is only over $X$. To exploit the increased flexibility of the symmetric formulation one would in general have to iterate over $X \times Y$, thus increasing the complexity of the construction.
Moreover, as we have discussed for example in \thref{thm:ShieldingYHatBound} and as demonstrated numerically, we already have $|\neigh_k| = \mc{O}(|X|)$, which will not be improved upon by a symmetric variant.

\paragraph{Comparison with \cite{ObermanOptimalTransportationLP2015}.}
	The approach presented in \cite{ObermanOptimalTransportationLP2015} is similar to \funcSolveShortCuts\ combined with \funcSolveMultiScale\ (Algorithms \ref{alg:ShortCutSolver} and \ref{alg:MultiScale}), in particular to the preliminary version presented in \cite{Schmitzer-SSVM2015}. Discrete optimal transport problems are solved hierarchically from coarse to fine, while only looking at sparse sub-problems. 
	The sparse region of interest on the finer scale is generated by taking the support of the coarse solution and adding neighbouring elements (in the product space). The intuition behind this is similar to the discussion presented in Section \ref{sec:ShortCutsIntuition}.
	This procedure is justified informally by arguing that the discrete optimal solution does converge weakly to the underlying optimal continuous coupling, which is known to live on a graph and by a sensitivity analysis of linear programming. However, no rigorous proof for the global optimality of the obtained solutions is given.
	The main focus of \cite{ObermanOptimalTransportationLP2015} lies in providing a fast practical method, which is demonstrated on a large set of numerical examples, involving the approximate reconstruction of the continuous Monge map, computation of free boundaries in optimal partial transportation and Wasserstein barycenters.
	The reported run-times are comparable to our results.
	
	Adding the neighbours of the support in the product space can be seen as a symmetrized version of (\ref{item:ShieldAlgYNeigh}) in Algorithm \ref{alg:ConstructShielding} (cf.\ the above discussion on symmetry). However, there is no equivalent to (\ref{item:ShieldAlgYLoop}).
	For the squared Euclidean distance over a Cartesian grid and for smooth marginal densities we found that this works well in practice: after several iterations with the intuitive approach the solver would typically converge to the globally optimal solution, very similar to the full shielding method. This can be understood by looking at the geometric interpretation of the shielding condition in that case (Fig.\ \ref{fig:ShieldingEuclidean}): when the marginals are smooth, the assignment is usually spatially regular and therefore the interior of the polytope, which is not shielded, is very small (and thus step (\ref{item:ShieldAlgYLoop}) would not add many elements).
	However, for inhomogeneous densities, for $p \neq 2$ and for point clouds that do not lie on regular Cartesian grids, we observed that the intuitive method was prone to getting stuck in slightly sub-optimal couplings.
	While this is probably sufficient for many practical applications, it should be noted that at least for the standard scenario of squared Euclidean distance over Cartesian grids, the implementation of (\ref{item:ShieldAlgYLoop}) is very simple (see Remark \ref{rem:ShieldingCartesianGrids}) and thus global optimality can be ensured with little additional effort.
	
\paragraph{Complexity.}
Determining the overall complexity of Algorithm \ref{alg:MultiScale} is an important open question. It depends on various factors, including complexity characteristics of the internal solver, the sizes of the neighbourhoods $\{\neigh_k\}_k$ and the number of outer iterations at each level in Algorithm \ref{alg:ShortCutSolver}.
As pointed out in Remark \ref{rem:ShieldingYHatBoundInterpretation}, using statements like \thref{thm:ShieldingYHatBound} to control the maximum size of the sparse neighbourhoods would require sufficiently strong a priori bounds on the regularity of the optimal couplings $\{\pi_k\}_k$ in Algorithm \ref{alg:ShortCutSolver} over \emph{multiple iterations}.
To our knowledge such estimates are currently not available and thus a complete complexity analysis cannot be given here.
We have however provided numerical evidence that the neighbourhood sizes scale as predicted by \thref{thm:ShieldingYHatBound} and that the overall run-time is reduced significantly.
Note further that this uncertainty only affects the efficiency of the scheme. Global optimality of the final result is rigorously established and is guaranteed even in cases where the method is very slow.

\paragraph{Conclusion.}
	Dense optimal transport problems appear in many applications. Often it is known that the optimal coupling will have sparse support, but existing discrete solvers are not able to take advantage of this sparsity and thus scale poorly on large problems.
	Our paper provides a way of verifying global optimality of a coupling by only looking at a suitable shielding subset of the full product space. This can be interpreted as discrete equivalent for well-known continuum results.
	Based thereon we proposed an algorithm that provably solves dense problems globally via a sequence of sparse problems. This algorithm can be combined with coarse-to-fine multi-scale methods.
	A part of this algorithm is the efficient construction of sparse shielding sets, which must be adapted to the cost function. We explicitly discuss this construction for several costs on $\R^n$ (including the squared Euclidean distance), the sphere and noisy variants thereof and gave some indications on why one can expect these sets to be sparse.
	The efficiency of the scheme in terms of run-time and sparsity was demonstrated numerically.
	Our algorithm thus allows the application of discrete solvers to larger problems, where continuum solvers may not be applicable either due to noisy costs or irregular marginals with strongly fluctuating densities.
	
	Future work will comprise a more detailed study of other cost functions and more efficient implementations for hierarchical bounds needed for the construction of shielding neighbourhoods.
	
	\textbf{	Acknowledgements.} The author gratefully acknowledges support by a public grant overseen by the French National Research Agency (ANR) as part of the `Investissements d'avenir', program-reference ANR-10-LABX-0098 and the European Research Council (project SIGMA-Vision).

\appendix

\section{Additional Proofs}
\label{apx:Proofs}
\paragraph{Proof of \thref{thm:ShieldingPsiPEucl}.}
	For $y \in \hcellY$ write $y = \rep(\hcellY) + \delta$ with $|\delta|\leq \rad(\hcellY)$. $h(z) = |z|^p$ is differentiable on $\R^n$ with
	\begin{align*}
		\partial h(z) = \{\nabla h(z) \} = \{ p\,|z|^{p-1}\,n(z) \}
	\end{align*}
	where $n(z)$ denotes normalizing the vector $z$ to unit length. The ambiguity of $n(z)$ at $z=0$ is irrelevant as $|z|^{p-1}=0$ in that case. So for convenience we define $n(0)$ to be some arbitrary vector of unit length. From \eqref{eq:ShieldingConvexSubgradient} one finds:
	\begin{align*}
		\psi_{(x_A,x_s)}(y) & > p\,|x_s-\rep(\hcellY) - \delta|^{p-1} \la n(x_s-\rep(\hcellY) - \delta), x_A-x_s \ra
	\end{align*}
	We now separately bound the inner product and the absolute value term. One has:
	\begin{align*}
		\la n(x_s-\rep(\hcellY) - \delta), x_A-x_s \ra & = |x_A-x_s|\cdot \cos(\measuredangle(x_A-x_s,x_s-\rep(\hcellY) - \delta))
	\end{align*}
	We need to find an upper bound for the angle (using the triangle inequality on the 2-sphere):
	\begin{align*}
		\measuredangle(x_A-x_s,x_s-\rep(\hcellY) - \delta) & \leq \measuredangle(x_A-x_s,x_s-\rep(\hcellY)) + \measuredangle(x_s-\rep(\hcellY),x_s-\rep(\hcellY) - \delta) \\
		\measuredangle(x_s-\rep(\hcellY),x_s-\rep(\hcellY) - \delta) & \leq \begin{cases}
			\arcsin(|\delta|/|x_s-\rep(\hcellY)|) & \tn{for } |\delta| < |x_s-\rep(\hcellY)| \\
			\pi & \tn{else}
			\end{cases}
			\leq \theta \\
		\intertext{Since the maximal (unsigned) angle is $\pi$, we eventually find:}
		\measuredangle(x_A-x_s,x_s-\rep(\hcellY) - \delta) & \leq \min \{ \pi, \measuredangle(x_A-x_s,x_s-\rep(\hcellY)) + \theta \} = \varphi
	\end{align*}
	Depending on the sign of $\cos(\varphi)$ we bound $|x_s-\rep(y)-\delta|$ from above or below which yields the expression for $R$.

\paragraph{Proof of \thref{thm:ShieldingPsiSphere}.}
	Let $\kreis$ be the unit circle which we identify with the interval $(-\pi,\pi]$ `with its ends connected'.
	Denote by $F : [0,\pi] \times \kreis \rightarrow \sphere$ the map from spherical coordinates onto $\sphere$:
	\begin{align}
		\label{eq:AppendixFSpherical}
		F(\theta,\varphi) = \begin{pmatrix} \sin \theta \cdot \cos \varphi \\
			\sin \theta \cdot \sin \varphi \\
			\cos \theta
			\end{pmatrix}
	\end{align}
	For some $y \in \hcellY$ with $d(y,\rep(\hcellY)) \leq \rad(\hcellY)$ clearly have:
	\begin{align*}
		d(x_A,y) - d(x_s,y) \geq \inf \{ d(x_A,y') - d(x_s,y') : y' \in \sphere, d(y',\rep(\hcellY)) \leq \rad(\hcellY) \}
	\end{align*}
	Now find a relaxation of the feasible set on the r.h.s.\ to obtain a tractable lower bound. 
	It can be shown that the metric ball around $\rep(\hcellY)$ is contained in a sufficiently large `rectangle' in spherical coordinates. More precisely:
	\begin{align*}
		\{ y' \in \sphere, d(y',\rep(\hcellY)) \leq \rad(\hcellY) \} \subset F(\mc{D}) \\
	\end{align*}
	with
	\begin{align*}
		\mc{D} & = \mc{D}_\theta \times \mc{D}_\varphi \\
		\mc{D}_\theta & = [\max\{0,\theta_B - \rad(\hcellY)\}, \theta_B + \rad(\hcellY) ] \\
		\mc{D}_{\varphi} & = \begin{cases}
			[\varphi_B - \hat{\varphi}, \varphi_B + \hat{\varphi}] & \tn{if } \cos^2 \rad(\hcellY) > \cos^2 \theta_B \\
			 \kreis & \tn{else}
			\end{cases} \\
		\hat{\varphi} & = \arccos \sqrt{ \frac{ \cos^2 \rad(\hcellY) - \cos^ 2 \theta_B}{1-\cos^2 \theta_B}}
	\end{align*}
	where in $[\varphi_B - \hat{\varphi}, \varphi_B + \hat{\varphi}]$ we have to take into account the `wrapping around' at $-\pi$.
	The angle $\hat{\varphi}$ can be obtained by computing the distance between the point $\rep(\hcellY)$ and a `meridian' $F([0,\pi] \times \{\varphi'\})$ and demanding that this distance may not be smaller than $\rad(\hcellY)$. This yields $\hat{\varphi}$ as a minimal difference in longitude.
	
	Let $y' = F(\theta',\varphi')$. Recall that $d(x,y) = \arccos \la x,y \ra$ where $\la \cdot, \cdot \ra$ denotes the usual Euclidean inner product in $\R^3$. Then
	\begin{align*}
		d(x_A,y') - d(x_s,y') & = \theta' - \arccos(
			\sin \theta_s \cdot \sin \theta' \cdot \cos \varphi' +
			\cos \theta_s \cdot \cos \theta')\,.
	\end{align*}
	Minimize this over $F(\mc{D})$. For every $\theta' \in \mc{D}_\theta$ the minimizing $\varphi'$ is as close to $-\pi$ (in the $\kreis$-sense) as possible. For any $\varphi' \in \mc{D}_{\varphi}$ the minimizing $\theta'$ is as close to $0$ as possible. This yields $\theta_{B,\tn{min}}$ and $\varphi_{B,\tn{max}}$. Then, depending on the sign of $\Delta d_{\tn{min}}$ pick $\xi$ from the sub-differential at either the minimal or maximal end of possible distances.

\phantomsection
\addcontentsline{toc}{section}{\refname}
\bibliography{references}{}
\bibliographystyle{plain}

\end{document}